\documentclass[12pt,oneside,a4papper]{osajnl}
\usepackage{amsthm}
\usepackage{comment}
\journal{jocn} 

\setboolean{shortarticle}{false}
\title{\centering 
Bilinear control to trajectories of 1D degenerate parabolic equations in moving domains
}

\author[1,2,*]{\centering Alfredo S. Gamboa}
\author[3]{André da Rocha Lopes}
\author[4]{Luis P. Yapu}

\affil[1]{Universidade do Estado do Rio de Janeiro, Escola Politécnica, Nova Friburgo, Brazil}
\affil[2]{Universidad Privada Boliviana, Departamento de Ciencias Exactas, Cochabamba, Bolivia}
\affil[3]{Universidade do Estado do Rio de Janeiro, Instituto de Matemática e Estatística, Rio de Janeiro, Brazil}
\affil[4]{Universidade Federal Fluminense, Instituto de Matemática e Estatística, Niterói, Brazil}

\affil[*]{\centering Contato: alfredo.soliz@iprj.uerj.br}

 \dates{\centering Compiled \today}

\doi{}

\begin{abstract}
In this paper, we are concerned with local controllability properties of degenerate parabolic equations in bounded domains that evolve in time. More precisely, we deal with the exact controllability to a positive trajectory of a one-dimensional semilinear degenerate equation governed via the coefficient of the reaction term. We apply a well-known local inversion method combined with some appropriate specific estimates. 
\end{abstract}

\definecolor{mygreen}{RGB}{44,162,67}
\definecolor{mylilas}{RGB}{186,85,211}
\setboolean{displaycopyright}{false}
\newcommand{\dpar}[2]{\frac{\partial #1}{\partial #2}}

\newcommand{\dpd}[2]{\frac{\partial^2 #1}{\partial #2^2}}

\newcommand{\cara}{\mathbb{1}}

\newtheorem{lema}{Lemma}
\newtheorem{teo}{Theorem}
\newtheorem{myth}{Theorem}
\newtheorem{mydef}{Definition}
\newtheorem{propo}{Proposition}
\newtheorem{coro}{Corollary}
\newcommand{\R}{\mathbb{R}}

\setboolean{displaycopyright}{false}

\begin{document}

\maketitle

\textbf{MSC Classification (2020)}: Primary: 35K65, 93B05; Secondary: 93C10. 

\textbf{keywords}: Degenerate parabolic equations, Moving domains, Controllability, Nonlinear systems in Control Theory, Carleman inequalities.

\section*{Introduction}

\qquad Let $\Omega \subset \mathbb{R}$ be an open bounded set with boundary $\Gamma =\partial \Omega$. For each $T>0$, we denote the cylinder $Q:= \Omega \times (0,T)$ with lateral boundary $\Sigma:= \Gamma \times(0,T)$ and assume that $\omega \subset \Omega$  is a nonempty open set. 

In the present article, we investigate the controllability properties of a moving-boundary problem for a semilinear degenerate parabolic equation governed by a bilinear control in a non-cylindrical space-time domain. To formulate the problem, let us consider $\ell :[0,T] \mapsto (0,\infty)$, a continuously differentiable function, and
$$
    \Omega_t =\{ \overline{x} \in (0,\ell (t)); \, \,\overline{x}= \ell (t)x, \,\, \text{for} \,\, x \in \Omega \},\, 0<t<T.
$$

For $t=0$, we identify $\Omega_0$ with $\Omega$ and assume that each point $\overline{x}$ of the original given domain $\Omega_0$ moves itself through a curve $t\mapsto \Omega_t \subset \mathbb{R}$. We deal
with systems where the control function acts on a set of the form $\widehat{\omega} \times (0,T)$, where $\widehat{\omega} \subset \Omega_t$ is an arbitrarily small open set. More precisely, the following semilinear parabolic system will be considered:
\begin{equation}\label{eq:PDE}
\left\{
\begin{array}
    [c]{lll}%
    u_t - \left(a(\overline{x}) u_{\overline{x}}\right)_{\overline{x}} + F(\overline{x},t,u)=\widehat{h}\cara_{\widehat{\omega}}u & \mbox{in} &
    \widehat{Q}:=\displaystyle\bigcup_{0\leq t \leq T} \{\Omega_t \times \{t\} \},\\
    u(0,t)=u(\ell(t),t)=0 & \mbox{on} & \widehat{\Sigma}:= \displaystyle\bigcup_{0\leq t \leq T} \{\Gamma_t \times \{t\} \},\\
    u(\overline{x},0)=u_{0}(\overline{x}) & \mbox{in} & \Omega,
\end{array}
\right.  %
\end{equation}
where $u = u(\overline{x}, t)$ denotes the associated state, $u_0$ is the initial data, $\widehat{h}=\widehat{h}(\overline{x}, t)$ is the control function, $\cara_{\widehat{\omega}}$ represents the
characteristic function of $\widehat{\omega}$ and $a$
is a diffusion coefficient which degenerates at the extremity $\overline{x}=0$. A model example of such a degenerate coefficient $a$ is the function $a(\overline{x})=\overline{x}^{\alpha}$, for $\alpha \in (0,1)$. Throughout the whole paper, the following hypotheses will be assumed:
\begin{itemize}
    \item[{\textbf {H1}}] $a \in C([0,\ell (t)]) \cap C^1 ((0,\ell(t)])$ satisfying $a(0)=0$, $a>0$ on $(0,\ell(t)]$, $a' \geq 0$,
    %\color{red}
    \begin{equation}\label{eq:cond_a}
        a(f(t)y)=g(t)a(y),
    \end{equation}
    for positive functions $f$ and $g$, 
    %\color{black}
    and $$\overline{x}a'(\overline{x})\leq Ka(\overline{x}),\,\forall \overline{x} \in [0,\ell(t)]\,\,\text{and some}\,\,K \in [0,1).$$
    
    \item[{\textbf {H2}}] F is a $C^1$ function , with bounded derivatives, satisfying $F(\overline{x},t,0)=0$ and 
    \begin{equation*}\label{hf}
    \vert F(\overline{x},t,r_1)-F(\overline{x},t,r_2)-D_3 F(\overline{x},t,r_2)(r_1 -r_2)\vert \leq C \vert r_1 -r_2\vert^2 \,\, \text{for any} \,\, (r_1 ,r_2) \in \mathbb{R} \times \mathbb{R}.
    \end{equation*}
    
    %\color{red}
    \item[{\textbf {H3}}] The positive function $\ell(t)$ satisfies $$\frac{\ell '(t)}{\ell(t)} \leq C,\,\,\text{for some}\,\, C>0,$$ and the function $b(t)$ defined by $b(t)=\frac{g(t)}{\ell(t)^2}$ is positive and satisfies $$\frac{b'(t)}{b(t)} \leq C_b,\,\,\text{for some}\,\, C_b>0.$$
    \end{itemize}
%\color{black}

In order to deal with the controllability properties of problem (\ref{eq:PDE}), it is necessary to introduce the following Sobolev weighted spaces
\begin{equation*}
    \begin{split}
        H^1_a(0,\ell (t)):=&\left\{ u\in L^2(0,\ell (t)) \ : \ u \ \text{is absolutely continuous in} \ \ (0,\ell (t)], \right. \\
        &\left. \quad \sqrt{a}u_{\overline{x}}\in L^2(0,\ell (t)) \ \text{and} \  u(0)=u(\ell (t))=0 \right\}        
    \end{split}
\end{equation*}
and 
$$H^2_a(0,\ell (t)):=\left\{u\in H^1_a(0,\ell (t)) \ : \ au_{\overline{x}}\in H^1(0,\ell (t)) \right\},$$
with respective norms
$$
\Vert u\Vert^{2}_{H^1_a(0,\ell (t))}:= \Vert u\Vert^{2}_{L^2(0,\ell (t))} + \Vert \sqrt{a}u_{\overline{x}}\Vert^{2}_{L^2(0,\ell (t))} \,\, \text{and} \,\,\Vert u\Vert^{2}_{H^2_a(0,\ell (t))}:= \Vert u\Vert^{2}_{H^1_a(0,\ell (t))} + \Vert (au_{\overline{x}})_{\overline{x}}\Vert^{2}_{L^2(0,\ell (t))}\,.
$$

In recent years there has been renewed interest in problems related with partial differential equations
formulated in domains that change in time. This is partly due to the fact that a number of problems in
mathematics are naturally posed in domains with moving boundaries, see for instance \cite{hh-00,lmz-02,gp-99} and references therein. Degenerate parabolic equations in moving boundaries are motivated by the need to model complex physical phenomena where diffusion vanishes at certain points or regions, typically leading to free boundaries or interfaces that evolve over time. These equations are crucial for understanding systems where transport is not uniform throughout the domain.

The degeneracy of the diffusion coefficient reflects the presence of regions where diffusion becomes weak or vanishes completely. This feature is typical in porous media, where permeability may decrease near impermeable boundaries, or in thermal processes involving materials with spatially varying conductivity. In such cases, the degeneracy induces a strong anisotropy in the diffusion process, significantly affecting the propagation of heat, mass, or other quantities.

In addition, the time dependence of the domain $\Omega_t$ models systems with moving boundaries. This framework is relevant in applications such as phase transition problems (e.g., melting and solidification), tumor growth, population dynamics in expanding habitats, and fluid flows in deformable regions. In these scenarios, the evolution of the boundary plays an important role in the overall dynamics of the system and must be explicitly taken into account.

The problem under consideration is more complex than the degenerate equation in a fixed domain since the domain itself is evolving, requiring a different approach to solve the partial differential equation. To solve the controllability problem of \eqref{eq:PDE}, we will construct a diffeomorphism that maps $\widehat{Q}$ onto $Q$. In that way, for each $t \in [0,T]$, we consider a family of functions $\{{\tau}_{t}\}_{0\leq t \leq T}$, where ${\tau}_{t}$ is a deformation of $\Omega$ into an open bounded set ${\Omega}_{t}$ of $\mathbb{R}$. We make the following assumptions on the function ${\tau}_{t}$:
\begin{itemize}
    \item For all $t \in [0,T]$, $\tau_t$ is a $C^2$-diffeomorphism from $\Omega$ to $\Omega_t$,
    \item $\tau_{t}$ has the regularity $C^{1}([0,T];C^{0}(\overline{\Omega},\mathbb{R}))\cap C^{0}([0,T];C^{2}(\overline{\Omega},\mathbb{R}))$.
\end{itemize}

The desired diffeomorphism is given as follows:
$$
    \tau_{t}^{-1}:\widehat{Q} \rightarrow Q \quad \text{defined by} \quad (\overline{x},t) \in \widehat{Q} \rightarrow (x,t) \in Q\,,\quad \text{where} \quad \overline{x}=\tau_t(x):=\ell(t)x\,.
$$

The notion of local controllability that we consider in this work is defined as follows.
\begin{mydef}
    It is said that (\ref{eq:PDE}) is locally null controllable at time $T$ if there exists $\varepsilon >0$ such that, for any $u_0 \in H_{a}^{1}(\Omega)$ with $$\Vert u_0\Vert_{H_{a}^{1}(\Omega)} \leq \varepsilon\,,$$ there exists at least a control function $\widehat{h} \in L^2 (\widehat{\omega} \times (0,T))$ such that the associated state $u$ satisfies
    \begin{equation}\label{nc}
    u(\cdot ,T)=0 \,\,\,\text{in}\,\,\, \Omega_T \,.
    \end{equation}
\end{mydef}

The goal of this work is to study the controllability properties of (\ref{eq:PDE}) for a positive trajectory in the following sense: We consider a positive trajectory $\widetilde{u} \in L^2 (0,T;H^{2}_{a}(\Omega_t))$. That is, there exists a constant $C >0$ such that $\vert \widetilde{u}\vert \geq C>0$, with $\widetilde{u}$ satisfying the uncontrolled equation
\begin{equation}\label{traj.prob1}
    \left\{
    \begin{array}
    [c]{lll}%
    \widetilde{u}_{t}- \left(a(\overline{x}) \widetilde{u}_{\overline{x}}\right)_{\overline{x}} + F(\overline{x},t,\widetilde{u})=0 & \mbox{in} &
    \widehat{Q}:=\displaystyle\bigcup_{0\leq t \leq T} \{\Omega_t \times \{t\} \},\\
    \widetilde{u}(0,t)=\widetilde{u}(\ell(t),t)=0 & \mbox{on} & \widehat{\Sigma}:= \displaystyle\bigcup_{0\leq t \leq T} \{\Gamma_t \times \{t\} \},\\
    \widetilde{u}(\overline{x},0)=\widetilde{u}_{0}(\overline{x}) & \mbox{in} & \Omega.
    \end{array}
    \right.  %
\end{equation}

\begin{mydef}
    It is said that (\ref{eq:PDE}) is locally exactly controllable to the positive trajectory $\widetilde{u}$ at time $T$ if there exists $\varepsilon >0$ such that, for any $u_0 \in H_{a}^{1}(\Omega)$ with $$\Vert u_0 -\widetilde{u}_0 \Vert_{H_{a}^{1}(\Omega)} \leq \varepsilon\,,$$ there exists at least a control function $\widehat{h} \in L^2 (\widehat{\omega} \times (0,T))$ such that the associated state $u$ satisfies
    \begin{equation}\label{ct}
    u(\cdot ,T)= \widetilde{u}(\cdot ,T) \,\,\,\text{in}\,\,\, \Omega_T \,.
    \end{equation}
\end{mydef}

Control of degenerate parabolic equations is a fairly well-developed subject in Control Theory. It is important to remark that semilinear nondegenerate equations have been studied extensively in the last decades, see \cite{dcbz-02, fr-71,clm-12,cz-00,FurImanu-96,lr-95} in the context of bounded cylindrical domains and \cite{cllp-25,lcmml-16,ll-22} in more general domains. Moreover, in the context of degenerate reaction-diffusion equations there are several interesting models, such as models in mathematical biology and in a wide variety of physical situations, see for instance \cite{cpz-04,kalash-87,nss-00}. In recent years several contributions treating degenerate PDEs appeared, in particular we mention the works by Cannarsa and collaborators \cite{Alabau_cannarsa_fragnelli-06,abcl-17,cmv1-04,cmv2-08,cmv3-16,cmv4-17,cty-10}.

The presence of a bilinear control that acts through the reaction term represents a realistic mechanism for influencing the system. In contrast to additive controls, the control considered here acts proportionally to the current state, modulating growth or decay rates. This type of control appears, for example, in chemical reactions where catalysts affect reaction rates, in ecological models where reproduction rates are regulated, or in thermal systems with feedback-dependent dissipation.

From a practical standpoint, the controllability properties studied in this work correspond to the ability to steer the system toward a desired configuration. Null controllability is associated with the suppression or extinction of a physical quantity, while controllability to a positive trajectory reflects the possibility of tracking a prescribed evolution. These objectives are of significant interest in applications where one seeks either to stabilize or to regulate complex dynamical systems under realistic constraints.

The goal of this work is to give some results on the bilinear controllability of a degenerate parabolic system. We refer to the early paper \cite{bms-82} on controllability of an abstract infinite dimensional bilinear system, which appears to be the first work on this subject in the framework of PDEs. In \cite{khapalov-02}, the author discussed the non-negative approximate controllability of a parabolic system with superlinear term governed by a bilinear control. Moreover, in \cite{khapalov_newton-02} he also discussed the bilinear null-controllability of a parabolic system with the reaction term satisfying Newton's Law. We also refer to the article \cite{pzh-06}, on exact controllability of parabolic systems. Important progress has been made recently in the  analysis of bilinear controllability of parabolic equations, we cite, for instance, Alabau-Boussouira et al. \cite{abcu-22,abcu-24} and Cannarsa et al. \cite{cdu-22}. In the context of degenerate hyperbolic equations, we mention Cannarsa et al. \cite{cmu-23}.

The main result in this paper is as follows.
\begin{myth}
    \label{th}
    Under the previous assumptions on the functions $a$ and $f$, the nonlinear system (\ref{eq:PDE}) is locally exactly controllable to the positive trajectory $\widetilde{u}$ at any time $T>0$.
\end{myth}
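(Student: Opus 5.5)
The plan is to reduce bilinear controllability to a trajectory to additive null controllability of a degenerate linear system on a fixed cylinder, and then close the argument with a local inversion (Liusternik) theorem.

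\textbf{Step 1: pass to the fixed domain.} Apply the diffeomorphism $\tau_t^{-1}$, setting $v(x,t)=u(\ell(t)x,t)$. Then
$$v_t=u_t+\frac{\ell'(t)}{\ell(t)}\,x\,v_x .$$
Using \eqref{eq:cond_a} in the form $a(\ell(t)x)=g(t)a(x)$ (taking $f=\ell$), we get
$$(a(\overline{x})u_{\overline{x}})_{\overline{x}}=b(t)\,(a(x)v_x)_x, \qquad b=\frac{g}{\ell^2}.$$
Hence $v$ solves
$$v_t-b(t)(av_x)_x-\frac{\ell'}{\ell}\,xv_x+F(\ell x,t,v)=h\cara_\omega v \quad \text{in } Q,$$
with homogeneous Dirichlet data and a fixed control set $\omega$. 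Writing $\widetilde v$ for the transformed trajectory, the goal becomes $v(\cdot,T)=\widetilde v(\cdot,T)$.

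\textbf{Step 2: reduce to an additive problem.} Set $y=v-\widetilde v$. Since $|\widetilde v|\ge C>0$, define $k:=h\widetilde v$, i.e. $h=k/\widetilde v$. The equation for $y$ becomes
$$y_t-b(t)(ay_x)_x-\frac{\ell'}{\ell}xy_x+F(\ell x,t,\widetilde v+y)-F(\ell x,t,\widetilde v)=k\cara_\omega+\frac{k}{\widetilde v}\cara_\omega\,y .$$
Its linearization at $(0,0)$ is
$$y_t-b(t)(ay_x)_x-\frac{\ell'}{\ell}xy_x+D_3F(\cdot,\widetilde v)\,y=k\cara_\omega,$$
an additive-control linear degenerate equation with a bounded potential and a first-order term. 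The first-order term is harmless: by \textbf{H1} with $K<1$ it is controlled through Hardy-type inequalities, since $x^2/a$ is bounded.

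\textbf{Step 3: Carleman estimate and null controllability with weights.} This is the main obstacle. For the adjoint
$$-\varphi_t-b(t)(a\varphi_x)_x+\left(\frac{\ell'}{\ell}x\varphi\right)_x+D_3F\,\varphi=G$$
I would prove a Carleman inequality with the Cannarsa--Martinez--Vancostenoble weights
$$\theta(t)=\frac{1}{(t(T-t))^4},\qquad \psi(x)=\lambda\left(\int_0^x\frac{s}{a(s)}\,ds-d\right),$$
plus a nondegenerate weight away from $x=0$ glued by cutoff. The time-dependent diffusion $b(t)$ is handled by \textbf{H3}: $b$ is bounded above and below, and $b'/b\le C_b$ makes the $\theta'$-type terms absorbable. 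The drift term is absorbed for large parameters.

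The Carleman estimate yields an observability inequality. Following Fursikov--Imanuvilov, I would then build, for $y_0\in H^1_a$ small, a control $k$ and a state $y$ in weighted spaces $E$ in which $e^{s\sigma}y$ and $e^{s\sigma}k$ (with $\sigma\sim\theta$) are $L^2$, and where $y\in L^2(H^2_a)\cap H^1(L^2)$ with $y(T)=0$. Obtaining the extra $H^2_a$ regularity with weights requires energy estimates multiplied by the weights, and compatibility with $H^1_a$ initial data.

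\textbf{Step 4: local inversion.} Define
$$\mathcal A(y,k)=\Big(y_t-b(ay_x)_x-\tfrac{\ell'}{\ell}xy_x+F(\widetilde v+y)-F(\widetilde v)-\tfrac{k}{\widetilde v}\cara_\omega y-k\cara_\omega,\ y(0)\Big)$$
from $E$ to a weighted $L^2\times H^1_a$. By \textbf{H2}, the remainder is quadratic, so $\mathcal A$ is $C^1$. The term $k y/\widetilde v$ is bilinear and bounded in the weighted norms, because the weight on $y$ dominates and $y\in L^\infty$ via $H^1_a$ embeddings in one dimension. Step 3 gives surjectivity of $\mathcal A'(0,0)$, and Liusternik's theorem then provides, for $\|u_0-\widetilde u_0\|_{H^1_a}\le\varepsilon$, a solution with $y(T)=0$. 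Undoing the change of variables with $\widehat h=k/\widetilde u$ yields the theorem. Note that $\widehat h$ stays in $L^2$ only because $|\widetilde u|\ge C$.
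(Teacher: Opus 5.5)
Your plan is correct in outline and has the same architecture as the paper's proof. Both use the rescaling $x=\overline{x}/\ell(t)$ with $f=\ell$ (so that $b=g/\ell^2$) and the substitution $k=h\widetilde v$ (the paper's $\tilde h=h\widetilde y$), which turns the linearization at $(0,0)$ into an additive-control problem. Both then rely on a weighted Carleman estimate (which the paper quotes rather than proves), a duality/Lax--Milgram construction of the control with weighted energy estimates for $H^1_a$ data (Proposition~\ref{addicional_estimates_case_linear}), and Liusternik's theorem.

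The one genuinely different step is the bilinear term. You estimate
\[
\Big\|\rho_2\,\frac{k}{\widetilde v}\,\cara_\omega\, y\Big\|_{L^2(Q)}\le C\,\|\rho_1 k\|_{L^2(Q)}\,\sup_{t\in[0,T]}\rho_1\|y(t)\|_{L^\infty(0,1)}\le C\,\|\rho_1 k\|_{L^2(Q)}\,\sup_{t\in[0,T]}\rho_1\|\sqrt a\, y_x(t)\|_{L^2(0,1)}.
\]
This uses $\rho_2\le C\rho_1^2$ from \eqref{eq:compara_rhos} and the embedding $H^1_a(0,1)\hookrightarrow L^\infty(0,1)$. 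The embedding holds because $xa'\le Ka$ with $K<1$ gives $a(x)\ge a(1)x^K$, hence $1/a\in L^1(0,1)$. The paper instead proves extra regularity of the constructed control, $\overline\rho\tilde h\in\mathsf U$ with \eqref{cr2}, and bounds the product through $L^4$ norms.

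Your bound uses only quantities controlled by the norm of the space on which $\mathcal A$ is defined. It therefore holds for every element of that space and every direction in the derivative. By contrast, the paper's bounds on $I_3$ and $X_2^1$ involve $\|\overline\rho h\|_{\mathsf U}$, which is not part of the norm of the space $\mathcal Y$ in \eqref{eq:espaceY} and is known only for the particular control produced by the linear theory. Your route thus makes the control-regularity proposition unnecessary and gives a cleaner proof that $\mathcal A$ is well defined and $C^1$. For the $C^1$ part, note that \textbf{H2} applied with $r_1,r_2$ interchanged yields $|D_3F(\cdot,\cdot,r_1)-D_3F(\cdot,\cdot,r_2)|\le 2C|r_1-r_2|$.

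Two points in your Step 3 should be made explicit.

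\emph{Weights near $t=0$.} The weights defining $E$ must stay bounded near $t=0$. If they behave like $\theta$ there, requiring the weighted state to lie in $L^2(Q)$ forces $y(0)=0$, which contradicts $y_0\neq 0$. You need the modified weight $1/m(t)$ with $m(0)>0$, as in Proposition~\ref{prop:carl_weights_t}.

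\emph{Source term for surjectivity.} Surjectivity of $\mathcal A'(0,0)$ requires linear null controllability with an arbitrary source term $G$ in the target space, not only with an initial datum. The target weight must be large enough that the Carleman estimate controls $\rho_2^{-1}\phi$ in $L^2(Q)$; this is where $3A^*<2\hat A$ is used. It must also be small enough that $\rho_2\le C\rho_1^2$, so that the quadratic and bilinear terms land in the target space.

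Finally, you only divide by $\widetilde v$ on $\omega\times(0,T)$, so the lower bound on the trajectory is needed only there. It is worth saying so, since a global bound $|\widetilde v|\ge C>0$ is incompatible with $\widetilde v=0$ on $\Sigma$.
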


The strategy to prove Theorem \ref{th} relies on an application of the \emph{Liusternik's Inverse Function Theorem} in Banach spaces; see \cite{Alekseev}. Let us start using a suitable change of variables that transforms (\ref{eq:PDE}) in a parabolic problem in a fixed cylindrical domain. Then, we verify that the assumptions of Liusternik's Theorem are satisfied.
%and consequently for any small initial data $u_0$, (\ref{eq:PDE}) is solvable.

The remainder of the paper is structured as follows. In Section \ref{sec:diffeomorphism}, we give details of the announced change of variables. In Section \ref{sec:linearized_system}, we consider and solve a null controllability problem for an associated linear parabolic equation; this will be needed later to prove that the hypotheses of Liusternik's Theorem are fulfilled. Section \ref{sec:control for nonlinear system} deals with the proof of Theorem \ref{th}. Finally, some additional comments are presented in Section \ref{sec:final_remarks}.

\section{Reduction to a fixed cylindrical domain.}
\label{sec:diffeomorphism}

Using the diffeomorphism  $\tau : Q\rightarrow \widehat{Q}$ the domains are transformed in the following way:
$$
    \Omega \rightarrow \Omega_t, \qquad {\omega} \rightarrow \widehat{\omega},
    %\qquad {\omega}_1 \rightarrow \widehat{\omega}_1
$$
and the functions are transformed as
$$
    \widehat{h}(\overline{x},t)=h(\tau_t(x),t), \qquad \mathbb{1}_{\widehat{\omega}}(\overline{x},t)=\mathbb{1}_{{\omega}}(\tau_t(x),t).
$$

The state function $u$ of our PDE (\ref{eq:PDE}) is transformed in a function $y$ in the cylindrical domain $Q$ such that
$$
    u(\overline{x},t)=y(x,t)=y\left(\tau_t^{-1}(\overline{x}),t\right) = y\left(\psi_t(\overline{x}),t\right),
$$
where we use the notations $\tau_t^{-1}=\psi_t$ and  $\psi(x,t)=\psi_t(x)$.

To obtain the equation verified by $y$ we use the formula
$$
    u_{\overline{x}} = \dpar{u}{\overline{x}}=\dpar{y}{x}\dpar{x}{\overline{x}} = y_x\dpar{\psi}{\overline{x}}(\tau_t(x),t).
$$

Thus,
\begin{equation*}
    \begin{split}
        ({a}(\overline{x})u_{\overline{x}})_{\overline{x}} &=
        \dpar{}{\overline{x}}\left( {a}(\overline{x}) \dpar{y}{x}\dpar{\psi}{\overline{x}} \right) \\
        &={a}(\overline{x})\dpar{\psi}{\overline{x}}\dpar{ }{\overline{x}}\left(\dpar{y}{x} \right)+\dpar{y}{x}\dpar{ }{\overline{x}}\left({a}(\overline{x})\dpar{\psi}{\overline{x}} \right) \\
        &={a}(\overline{x})\dpar{\psi}{\overline{x}}\dpar{ }{x}\left(\dpar{y}{x} \right)\dpar{\psi}{\overline{x}}+\dpar{y}{x}\dpar{ }{\overline{x}}\left({a}(\overline{x})\dpar{\psi}{\overline{x}} \right) \\
        &=\left( \dpar{\psi}{\overline{x}}(\tau_t(x),t) \right)^2{a}(\tau_t(x))y_{xx}+y_x\left({a}'(\tau_t(x))\dpar{\psi}{\overline{x}}+{a}(\tau_t(x))\dpd{\psi}{\overline{x}}(\tau_t(x),t) \right).
    \end{split}
\end{equation*}

On the other hand, 
$$
    \dpar{u}{t}=\dpar{u}{t}+\dpar{u}{x}\dpar{x}{t}=\dpar{y}{t}+\dpar{y}{x}\dpar{\psi}{t}=y_t+\left(\dpar{\psi}{t}(\tau_t(x),t)\right)y_x.
$$

Therefore, applying the diffeomorphism, our equation in the fixed domain $Q$ takes the form:
\begin{equation}\label{ec1p}
%\hspace*{-1.2cm}	
\begin{cases}
		y_t-\left( \dpar{\psi}{\overline{x}}(\tau_t(x),t) \right)^2{a}(\tau_t(x))y_{xx}+\left(\dpar{\psi}{t}(\tau_t(x),t)-{a}'(\tau_t(x))\dpar{\psi}{\overline{x}} \right) y_x \\
        \quad -{a}(\tau_t(x))\dpd{\psi}{\overline{x}}(\tau_t(x),t) y_x + F\left(\ell(t)x,t,y\right)={h}\cara_{_{{\omega}}}y, & \ \text{in} \ \ \ {Q},\\
		y=0, & \  \text{on} \ \ \ {\Sigma},\\
		y(0)=u_0(\tau_0(x))=y_0, & \ \text{in} \ \ \ \Omega.
	\end{cases}
\end{equation}

For our 1D problem, we use the following diffeomorphism given by rescaling,
$$
    x=\tau_t^{-1}(\overline{x})=\psi(\overline{x},t)=\frac{\overline{x}}{\ell(t)},
$$
sending 
$\Omega_t=\{\overline{x}\in\mathbb{R} \ | \ 0<\overline{x} <\ell(t)\}$ to the fixed domain $\Omega=\{x \in \mathbb{R} \ | \ 0< x<1 \}$.

Then, we have
$$
    \dpar{\psi}{\overline{x}}(\tau_t(x),t)=\frac{1}{\ell(t)}, \ \ \ \ \ \dpd{\psi}{\overline{x}}(\tau_t(x),t)=0, \ \ \ \ \ \ \dpar{\psi}{t}(\tau_t(x),t)=-\overline{x}\frac{\ell'(t)}{\ell(t)^2}=
    %-x\ell(t)\frac{\ell'(t)}{\ell(t)^2}=
    -\frac{\ell'(t)}{\ell(t)}x,
$$
and Equation (\ref{ec1p}) becomes
\begin{equation}\label{ec1b}
	%\hspace*{-1.2cm}	
    \begin{cases}
		y_t-\frac{1}{\ell(t)^2}{a}(\tau_t(x))y_{xx}-\frac{1}{\ell(t)}\left(\ell'(t)x+{a}'(\tau_t(x)) \right)y_x+F\left(\ell(t)x,t,y\right)={h}\cara_{_{{\omega}}}y, & \ \ \ \text{in} \ \ \ {Q},\\
		y=0, & \ \ \ \text{on} \ \ \ {\Sigma},\\
		y(0)=u_0(\tau_0(x))=y_0, & \ \ \ \text{in} \ \ \ \Omega.
	\end{cases}
\end{equation}

We have the identity
$\left({a}(\overline{x})y_x\right)_x= {a}(\overline{x})y_{xx}+{a}'(\overline{x})\dpar{\overline{x}}{x}y_x$, 
then
$$
    -\frac{1}{\ell(t)^2}{a}(\tau_t(x))y_{xx}=-\frac{1}{\ell(t)^2}\left({a}(\tau_t(x))y_x\right)_x+\frac{1}{\ell(t)}{a}'(\tau_t(x))y_x.
$$

Thus, (\ref{ec1b}) becomes
\begin{equation}\label{eqprin1}
	\hspace*{-1.2cm}	\begin{cases}
		y_t-\frac{1}{\ell(t)^2}\left({a}(\ell(t)x)y_x\right)_x-\frac{\ell'(t)}{\ell(t)}xy_x+F\left(\ell(t)x,t,y\right)={h}\cara_{_{{\omega}}}y, & \ \ \ \text{in} \ \ \ {Q},\\
		y=0, & \ \ \ \text{on} \ \ \ {\Sigma},\\
		y(0)=v_0(\tau_0(x))=y_0, & \ \ \ \text{in} \ \ \ \Omega.
	\end{cases}
\end{equation}

Using hypothesis (\ref{eq:cond_a}) for the function $a$, we define the functions 
$$
    b(t)=\frac{g(t)}{\ell(t)^2}, \qquad
    %$b(t)=\frac{a(\ell(t))}{\ell(t)^2}$,
    B(x,t)=\frac{\ell'(t)x}{\ell(t)\sqrt{a}},
$$  
%\ \ \ \ C(t)=\frac{\sqrt{a(\ell(t))}}{\ell(t)}$ 
and write (\ref{eqprin1}) as
\begin{equation}\label{eqprin}
	\hspace*{-1.2cm}	\begin{cases}
		y_t-b(t)\left({a}(x)y_x\right)_x-B(x,t)\sqrt{a}y_x+F\left(\ell(t)x,t,y\right)={h}\cara_{_{{\omega}}}y, & \ \ \ \text{in} \ \ \ {Q},\\
		y=0, & \ \ \ \text{on} \ \ \ {\Sigma},\\
		y(0)=u_0(\tau_0(x))=y_0, & \ \ \ \text{in} \ \ \ \Omega.
	\end{cases}
\end{equation}

%\color{red}
By hypothesis \textbf{H3} we remark that $b'(t)/b(t)$ and $B(x,t)$ are bounded functions.
%\color{black}

The uncontrolled trajectory equation after the change of variable becomes
\begin{equation}\label{eqprinsin}
	%\hspace*{-1.2cm}	
    \begin{cases}
		\widetilde{y}_t-b(t)\left({a}(x)\widetilde{y}_x\right)_x-B(x,t)\sqrt{a}\widetilde{y}_x+F\left(\ell(t)x,t,\widetilde{y}\right)=0, & \ \ \ \text{in} \ \ \ {Q},\\
		\widetilde{y}=0, & \ \ \ \text{on} \ \ \ {\Sigma},\\
		\widetilde{y}(0)=\widetilde{y}_0, & \ \ \ \text{in} \ \ \ \Omega.
	\end{cases}
\end{equation}

Substracting \eqref{eqprinsin} from  \eqref{eqprin} and making the change of variables $z=y-\widetilde{y}$ we get the system
\begin{equation}\label{nolineal}
    \begin{cases}
		z_t-b(t)\left({a}(x)z_x\right)_x-B(x,t)\sqrt{a}z_x+F\left(\ell(t)x,t,z+\widetilde{y}\right)-F\left(\ell(t)x,t,\widetilde{y}\right)={h}\cara_{_{{\omega}}}(z+\widetilde{y}), & \ \ \ \text{in} \ \ \ {Q},\\
		z=0, & \ \ \ \text{on} \ \ \ {\Sigma},\\
		z(0)=y_0-\widetilde{y}_0=z_0, & \ \ \ \text{in} \ \ \ \Omega.
    \end{cases}
\end{equation}

\section{Analysis of the controllability of the linearized system.}
\label{sec:linearized_system}

Consider the linearization of \eqref{nolineal} at zero,
\begin{equation}\label{lineal}
    \begin{cases}
		z_t-b(t)\left({a}(x)z_x\right)_x-B(x,t)\sqrt{a}z_x+D_3F\left(\ell(t)x,t,\widetilde{y}\right)z=\tilde h \cara_{_{{\omega}_1}} + G, & \ \ \ \text{in} \ \ \ {Q},\\
		z=0, & \ \ \ \text{on} \ \ \ {\Sigma},\\
		z(0)=y_0-\widetilde{y}_0=z_0, & \ \ \ \text{in} \ \ \ \Omega.
    \end{cases}
\end{equation}

In equation 
(\ref{lineal}) we define the new control $\tilde h$ such that
$$
    \tilde h \cara_{_{{\omega}_1}}:={h}\cara_{_{{\omega}_1}}\widetilde{y}, \qquad \text{where} \quad \omega_1\subset\subset \omega.
$$

As usual, the controllability of (\ref{lineal}) is closely related to the properties of the associated adjoint states. In this case, the adjoint of (\ref{lineal}) is given by

\begin{equation}\label{linealadjunto1}
	\begin{cases}
		-\varphi_t-b(t)\left({a}(x)\varphi_x\right)_x+\left(B(x,t)\sqrt{a}\varphi\right)_x+D_3F\left(\ell(t)x,t,\widetilde{y}\right)\varphi=H, & \ \ \ \text{in} \ \ \ {Q},\\
		\varphi=0, & \ \ \ \text{on} \ \ \ {\Sigma},\\
		\varphi(T)=\varphi^T, & \ \ \ \text{in} \ \ \ \Omega,
	\end{cases}
\end{equation}
where $\varphi^T \in L^2(\Omega)$ and $H \in L^2(Q)$.

From the definition of $B(x,t)$ we have $\left(B(x,t)\sqrt{a}\varphi\right)_x=B(x,t)\sqrt{a}\varphi_x+\frac{\ell'(t)}{\ell(t)}\varphi$ and equation  (\ref{linealadjunto1}) becomes: 
\begin{equation}\label{linealadjunto}
	\begin{cases}
		-\varphi_t-b(t)\left({a}(x)\varphi_x\right)_x+B(x,t)\sqrt{a}\varphi_x+c(x,t)\varphi=H, & \ \ \ \text{in} \ \ \ {Q},\\
		\varphi=0, & \ \ \ \text{on} \ \ \ {\Sigma},\\
		\varphi(T)=\varphi^T, & \ \ \ \text{in} \ \ \ \Omega,
	\end{cases}
\end{equation}
where $c(x,t)=\frac{\ell'(t)}{\ell(t)}+D_3F\left(\ell(t)x,t,\widetilde{y}\right)$.

\subsection{Hilbert spaces in the divergence case}

Following \cite{Alabau_cannarsa_fragnelli-06} in the \emph{weakly degenerate} case, for a system in divergence form, we consider the weighted Hilbert spaces:
\begin{equation*}
    \begin{split}
        H^1_a(0,1)=&\left\{ u\in L^2(0,1) \ : \ u \ \text{is absolutely continuous in} \ \ (0,1], \right. \\
        &\left. \quad \sqrt{a}u_x\in L^2(0,1) \ \text{and} \  u(0)=u(1)=0 \right\}        
    \end{split}
\end{equation*}
and 
$$
    H^2_a(0,1)=\left\{u\in H^1_a(0,1) \ : \ au_x\in H^1(0,1) \right\}.
$$

In both cases, we consider inner products and norms given by
$$
    \langle u,v \rangle_{H^1_a(0,1)}=\langle u,v \rangle_{L^2(0,1)}+\langle \sqrt{a}u_x,\sqrt{a}v_x \rangle_{L^2(0,1)}, \ \ \|u\|^2_{H^1_a(0,1)}=\|u\|^2_{L^2(0,1)}+\|\sqrt{a} u_x\|^2_{L^2(0,1)}
$$
for all $u, v\in H^1_a(0,1)$, and
$$
    \langle u,v \rangle_{H^2_a(0,1)}=\langle u,v \rangle_{H^1_a(0,1)}+\langle ({a}u_x)_x,({a}v_x)_x \rangle_{L^2(0,1)}, \ \ \|u\|^2_{H^2_a(0,1)}=\|u\|^2_{H^1_a(0,1)}+\|({a}u_x)_x\|^2_{L^2(0,1)}
$$
for all $u, v\in H^2_a(0,1)$.
   
\subsection{Carleman estimates}

Let $\omega'=(\alpha',\beta') \subset\subset \omega$ and let $\Psi : [0,1] \to \R$ be a $C^2$ function such that
$$
    \Psi(x) = 
    \begin{cases}
        \int_0^x \frac{s}{a(s)} ds, \quad x \in [0,\alpha'), \\
        -\int_{\beta'}^x \frac{s}{a(s)} ds, \quad x \in [\beta',1].
    \end{cases}
$$

For $\lambda \geq \lambda_0$ define the functions
\begin{equation*}
    \begin{split}
        &\theta(t) = \frac{1}{(t(T-t))^4}, \qquad \eta(x) = e^{\lambda(|\Psi|_\infty + \Psi)}, \qquad \sigma(x,t) = \theta(t) \eta(x), \\
        &\qquad\qquad\text{and}\quad \varphi(x,t) = \theta(t) (e^{\lambda(|\Psi|_\infty + \Psi)}-e^{3\lambda|\Psi|_\infty}).
    \end{split}    
\end{equation*}

The following proposition was proved for a cylindrical domain in \cite{DemarqueLimacoViana_deg_eq2018}
and for a non-cylindrical domain in \cite{GYL-Carleman-2025}.
Indeed, we need a Carleman estimate for the linear equation
\begin{equation}\label{adjunto2_texto_art}
	\begin{cases}
		v_t-b(t)\left(a(x)v_x\right)_x-d(x,t)\sqrt{a}v_x+c(x,t)v =H(x,t), & \ \ \ \text{in} \ \ \ {Q},\\
		v=0, & \ \ \ \text{on} \ \ \ {\Sigma},\\
        v(T) = v^T,& \ \ \ \text{in} \ \ \ (0,1),
	\end{cases}
\end{equation}
where $b$ is a continuous positive function in $[0,T]$ such that  $\frac{b'(t)}{b(t)} \leq C_b$, for some constant $C_b>0$, 
and we denote
$$
    m := \min\limits_{t \in [0,T]} b(t)>0, \quad M :=\max\limits_{t \in [0,T]} b(t).
$$

\begin{propo}[\cite{GYL-Carleman-2025}]
\label{prop_carleman_1}
    There exist $C>0$ and $\lambda_0,  s_0>0$ such that every solution $v$ of (\ref{adjunto2_texto_art}) satisfies, for all
    $s\geq s_0$, $\lambda\geq \lambda_0$ and any $v^T\in L^2(Q)$, that
    \begin{equation}\label{ecjv34}
        \int_{0}^{T}\int_{0}^{1}e^{2s\varphi}\left(  (s\lambda)\sigma a b^2 v_x^2+(s\lambda)^2\sigma^2b^2 v^2 \right)\leq C\left(	\int_{0}^{T}\int_{0}^{1} e^{2s\varphi} |H|^2+(\lambda s)^3\int_{0}^{T}\int_{\omega_1} e^{2s\varphi} \sigma^3  v^2  \right).
    \end{equation}
\end{propo}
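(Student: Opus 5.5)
The plan is to prove the Carleman estimate (\ref{ecjv34}) directly, following the classical weakly degenerate strategy of \cite{Alabau_cannarsa_fragnelli-06, DemarqueLimacoViana_deg_eq2018}, with extra care for the time-dependent factor $b(t)$ and the drift $d\sqrt{a}v_x$. Throughout, $\varphi$ denotes the Carleman weight and not the adjoint state.

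\textbf{Step 1: reduction to a principal operator.} First assume $d=c=0$. Set $w=e^{s\varphi}v$ and write $P w := e^{s\varphi}(v_t-b(av_x)_x)$. Split this as $P^+w+P^-w$, where
\[
P^+w=-b(aw_x)_x-s\varphi_t w-s^2 b a\varphi_x^2 w,
\qquad
P^-w=w_t+2sb a\varphi_x w_x+sb(a\varphi_x)_x w .
\]
Then expand $\|Pw\|^2\ge 2\langle P^+w,P^-w\rangle$.

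\textbf{Step 2: computing the cross term.} Integrate by parts in $x$ and $t$. The boundary terms at $x=0$ vanish because $w\in H^1_a$, $aw_x\in H^1$, and $K<1$; this is the argument of \cite{Alabau_cannarsa_fragnelli-06}. On $[0,\alpha')$ we have $\Psi'=x/a$, so $a\varphi_x=\lambda\theta\eta x$. The condition $xa'\le Ka$ then makes the term $\int s\lambda\sigma b^2 a(2a-xa')w_x^2/a$ nonnegative and comparable to $s\lambda\sigma a b^2w_x^2$.

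The new feature is the time derivative of $b$. It produces terms like $\int b'(t)\,a w_x^2$ and $s\int b' a\varphi_x^2w^2$. Using $b'/b\le C_b$ and $b\ge m$, these are of lower order: they are $O(b^2\,a w_x^2)$ without the factor $s\lambda\sigma$. Since $\sigma\ge cT^{-8}$, they are absorbed for $s$ large.

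The terms involving $\varphi_t,\varphi_{tt}$ are handled by $|\theta_t|\le C\theta^{5/4}$ and $|\theta_{tt}|\le C\theta^{3/2}$, together with the $s^2\lambda^2\sigma^2$ dominance. On the non-degenerate region $[\alpha',1]$, where $\Psi$ is a smooth function, one obtains the standard Fursikov--Imanuvilov positivity outside $\omega'$. The leftover terms inside $\omega'$ are localized there.

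\textbf{Step 3: localizing to $\omega_1$.} To pass from $\omega'$-localized gradient terms to a zeroth-order term on $\omega_1$, introduce a cutoff $\xi\in C_c^\infty(\omega)$ with $\xi=1$ on $\omega'$. Multiply the equation by $\xi^2 s\lambda\sigma e^{2s\varphi} v$, integrate, and apply Young's inequality, producing $(s\lambda)^3\int_{\omega}\sigma^3e^{2s\varphi}v^2$. I will take the $\omega$, $\omega'$, $\omega_1$ nesting so that the final observation set is the one written in (\ref{ecjv34}), relabelling if necessary.

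\textbf{Step 4: reinstating the lower-order terms.} Move $d\sqrt{a}v_x$ and $cv$ to the right-hand side, using that $d$ and $c$ are bounded. The bounds are
\[
\int e^{2s\varphi}d^2 a v_x^2\le \|d\|_\infty^2 m^{-2}\int e^{2s\varphi}b^2 a v_x^2,
\qquad
\int e^{2s\varphi}c^2v^2\le \|c\|_\infty^2 m^{-2}\int e^{2s\varphi}b^2v^2,
\]
and both are absorbed by the left side for $s\ge s_0$, since $s\lambda\sigma\to\infty$. Returning from $w$ to $v$ uses $w_x=e^{s\varphi}(v_x+s\varphi_xv)$ and a Hardy inequality, $\int (a/x^2)w^2\le C\int a w_x^2$, valid since $K<1$. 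This controls the terms $s^2\lambda^2\sigma^2 x^2/a\,w^2$ near $0$.

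\textbf{Main obstacle.} I expect the delicate part to be the Step 2 bookkeeping near the degenerate point together with the time-dependent $b$. The integration by parts in $t$ of $b^2 a w_x^2$-type quantities creates $bb'$ terms that have no sign and must be controlled purely via $C_b$. Also, the weight's $x$-behaviour $x^2/a$ must be matched with the Hardy inequality to obtain the factor $\sigma^2b^2v^2$ uniformly up to $x=0$.
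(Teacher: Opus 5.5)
The paper gives no proof of this proposition. It is imported from \cite{GYL-Carleman-2025}, with the cylindrical version in \cite{DemarqueLimacoViana_deg_eq2018}, and your outline is the standard argument behind those references, so the plan is sound: conjugation by $e^{s\varphi}$, the $P^{\pm}$ splitting, the boundary analysis at $x=0$ from \cite{Alabau_cannarsa_fragnelli-06} using $K<1$, cutoff localization after choosing $\omega'\subset\subset\omega_1$, absorption of the bounded drift and potential, and a Hardy inequality to recover $(s\lambda)^2\sigma^2 v^2$ up to $x=0$ (conveniently applied to $\sqrt{\eta}\,w$ so the constant does not depend on $\lambda$). Two things to state explicitly: the terms $-\int_Q b' a w_x^2$ and $s^2\int_Q b' a\varphi_x^2 w^2$ need $b'\le Cb$ and $b'\ge -Cb$ respectively, so you need the two-sided bound $|b'|\le Cb$ (true because $b\in C^1([0,T])$ is positive, although \textbf{H3} is stated one-sidedly); and $s_0$ must be allowed to depend on $\lambda$, since $|\varphi_t|+|\varphi_{tt}|\le C\theta^{3/2}e^{3\lambda|\Psi|_\infty}$ is absorbed only by taking $s\ge C e^{3\lambda|\Psi|_\infty}$.
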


In order to obtain the global null controllability of the linearized system, we need a Carleman inequality with weights that do not vanish at $t=0$. For that, consider the function $m \in C^\infty([0,T])$ satisfying $m(0)>0$,
\begin{equation}
    \label{eq:def_m}
    m(t) \geq t^4(T-t)^4, \quad t \in (0,T/2], \qquad\qquad m(t) = t^4(T-t)^4, \quad t \in [T/2,T],  
\end{equation}
and define
$$
    \tau(t) = \frac{1}{m(t)}, \qquad \zeta(x,t) = \tau(t) \eta(x), \qquad A(x,t)  = \tau(t) (e^{\lambda(|\Psi|_\infty + \Psi)}-e^{3\lambda|\Psi|_\infty}).
$$
\begin{propo}[Carleman Estimate \cite{GYL-Carleman-2025}]\label{prop:carl_weights_t} 
    There exist positive constants $C$, $\lambda_0$ and $s_0$ such that, for any $s \geq s_0$, $\lambda \geq \lambda_0$ and any $v^T\in L^2(Q)$, the corresponding solution $v$ of (\ref{adjunto2_texto_art}) satisfies
    $$
        \int_{0}^{T}\int_{0}^{1}e^{2sA}\left(  (s\lambda)\zeta a b^2 v_x^2+(s\lambda)^2\zeta^2b^2 v^2 \right) \leq C \left( \int_Q e^{2s A} |H|^2dxdt + \int_{\omega_1 \times(0,T)} e^{2s A} s^3 \lambda^3 \zeta^3 |v|^2 dxdt\right).
    $$
\end{propo}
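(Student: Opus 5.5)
We derive Proposition \ref{prop:carl_weights_t} from Proposition \ref{prop_carleman_1} by the standard splitting of $[0,T]$ at $T/2$. The two weights coincide on $[T/2,T]$: there $m(t)=t^4(T-t)^4$, so $\tau=\theta$, $\zeta=\sigma$ and $A=\varphi$. On $[0,T/2]$ the new weights are bounded above and below by positive constants, because $m(0)>0$ and $m\geq t^4(T-t)^4$ is smooth and positive. On that interval we therefore expect to replace the Carleman estimate by an energy estimate for the backward problem \eqref{adjunto2_texto_art}.

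\textbf{Step 1 (the region $[T/2,T]$).} Since the weights agree with those of Proposition \ref{prop_carleman_1} there, we have
\[
\int_{T/2}^{T}\!\!\int_0^1 e^{2sA}\big((s\lambda)\zeta ab^2v_x^2+(s\lambda)^2\zeta^2b^2v^2\big)\le \int_Q e^{2s\varphi}\big((s\lambda)\sigma ab^2v_x^2+(s\lambda)^2\sigma^2b^2v^2\big).
\]
By \eqref{ecjv34}, the right-hand side is bounded by $C\big(\int_Q e^{2s\varphi}|H|^2+(s\lambda)^3\int_{\omega_1\times(0,T)}e^{2s\varphi}\sigma^3v^2\big)$. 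Note that $\varphi<0$ and $\theta\ge m^{-1}$, since $m\ge t^4(T-t)^4$ on $(0,T/2]$ and equality holds on $[T/2,T]$. Hence $\varphi\le A$ on all of $Q$, so $e^{2s\varphi}\le e^{2sA}$, and the $H$-term is dominated as required. The observation term also needs $\sigma^3e^{2s\varphi}\le C\zeta^3e^{2sA}$ on $(0,T/2)$. This holds because $\theta^3e^{2s\theta(\cdots)}$ is bounded uniformly as $t\to0$, since the exponential decay dominates.

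\textbf{Step 2 (the region $[0,T/2]$).} We multiply \eqref{adjunto2_texto_art} by $v$ and integrate in $x$. Using $b\ge m>0$, Young's inequality on $d\sqrt a\,v_xv$ (absorbing it into $b\,a v_x^2$), and the boundedness of $d$ and $c$, we obtain
\[
-\frac{d}{dt}\|v(t)\|^2+m\|\sqrt a v_x(t)\|^2\le C\|v(t)\|^2+\|H(t)\|^2 .
\]
Gronwall's inequality backward from $t\in[T/2,3T/4]$ then gives
\[
\int_0^{T/2}\!\!\int_0^1(v^2+av_x^2)\le C\Big(\int_{T/2}^{3T/4}\!\!\int_0^1 v^2+\int_0^{3T/4}\!\!\int_0^1|H|^2\Big),
\]
after averaging over $t\in[T/2,3T/4]$. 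On $[0,3T/4]$ the weights $e^{2sA}$, $\zeta$ and $b$ are bounded above and below by positive constants depending on $s,\lambda$. Consequently the left-hand side of the target inequality restricted to $(0,T/2)$ is bounded by $C\int_0^{3T/4}\int_0^1e^{2sA}|H|^2$ plus $C\int_{T/2}^{3T/4}\int_0^1 e^{2sA}\zeta^2 v^2$. The latter is in turn controlled by Step 1, since on $[T/2,3T/4]$ the weight $\zeta^2$ is bounded below and $A=\varphi$ there.

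\textbf{Step 3.} Adding the two regions yields the claim for $s\ge s_0$ and $\lambda\ge\lambda_0$, where $C$ may depend on $s,\lambda,T$ (this dependence is harmless for the later use).

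The main obstacle is the energy estimate in Step 2 in the degenerate setting. The integration by parts of $-b(av_x)_x v$ requires that the boundary term $a v_xv$ vanish at $x=0$, which follows in the weakly degenerate framework $H^2_a$ of \cite{Alabau_cannarsa_fragnelli-06}. The drift $d\sqrt a\,v_x$ is absorbed precisely because it carries the factor $\sqrt a$; we would first check that this absorption works with the constant $m$, and that the bound $b'/b\le C_b$ is not needed in this step.
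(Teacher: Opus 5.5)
The paper does not prove this proposition; it quotes it from \cite{GYL-Carleman-2025}. Your argument is the standard way of passing from the weights $(\varphi,\sigma)$ of Proposition~\ref{prop_carleman_1} to $(A,\zeta)$, and its steps are correct:
on $[T/2,T]$ the weights coincide; the inequalities $\varphi\le A$ and $\sigma^3e^{2s\varphi}\le C\zeta^3e^{2sA}$ handle the right-hand side; and on $[0,T/2]$ a backward energy estimate, averaged over $[T/2,3T/4]$, is closed using Step~1. You also rightly read \eqref{adjunto2_texto_art} as the backward problem $-v_t-b(av_x)_x+\dots=H$, $v(T)=v^T$, as in \eqref{linealadjunto}. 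With the sign of $v_t$ as printed, the problem with terminal data would be ill-posed.

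What your argument does not give is the statement with the quantifiers as written, where $C$ is fixed before $s$ and $\lambda$. You flag this yourself. It is, however, intrinsic to Step~2 and not a matter of bookkeeping. The energy estimate only controls $\|v(t)\|_{L^2(0,1)}$ and $\|\sqrt a\, v_x(t)\|_{L^2(0,1)}$. You therefore have to compare suprema in $x$ of the target weights with infima in $x$ of $e^{2s\varphi}\sigma^2$ and of $e^{2sA}$. Since $\max_x e^{2sA(x,t)}/\min_x e^{2sA(x,t)}=e^{2s\tau(t)(\max\eta-\min\eta)}$, these ratios grow exponentially in $s$.

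Step~1, by contrast, is uniform. For $s$ large, the map $r\mapsto r^3e^{-2sr(e^{3\lambda|\Psi|_\infty}-\eta)}$ is decreasing on $[\min\tau,\infty)$, so $\sigma^3e^{2s\varphi}\le\zeta^3e^{2sA}$ holds with constant $1$.

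What you have proved is the estimate for each fixed $s\ge s_0$, $\lambda\ge\lambda_0$, with $C=C(s,\lambda,T)$. That is exactly what is used later, since $s$ and $\lambda$ are frozen once $\rho_0,\rho_1,\rho_2$ in \eqref{eq:weights_rhos} are defined. Your remark that the dependence is harmless is therefore right, but the proposition should be stated in that weaker form.
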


%\begin{proof}
%    The proof of this proposition is given in \cite{GYL-Carleman-2025}. 
%\end{proof}

As a corollary, we get the following observability inequality.
\begin{coro} 
    \label{cor:observability}
    There exist positive constants $C$, $\lambda_0$ and $s_0$ such that, for any $s \geq s_0$, $\lambda \geq \lambda_0$ and any $v^T\in L^2(Q)$, the corresponding solution $v$ of (\ref{adjunto2_texto_art}) with $H = 0$, satisfies
    \begin{equation}\label{Observability1}
        \|v(0)\|^2_{L^2(0,1)}  \leq C \int_{0}^{1}\int_{\omega_1} e^{2s A} s^3 \lambda^3 \zeta^3 |v|^2dxdt.
    \end{equation}	
\end{coro}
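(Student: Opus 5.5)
The plan is to derive the observability inequality from the Carleman estimate of Proposition \ref{prop:carl_weights_t}, combined with a standard energy (dissipation) estimate for the backward equation (\ref{adjunto2_texto_art}) with $H=0$.

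First, I would set $H=0$ in Proposition \ref{prop:carl_weights_t} and retain only the zero-order term on the left-hand side:
$$
\int_0^T\!\!\int_0^1 e^{2sA}(s\lambda)^2\zeta^2 b^2 v^2 \le C\int_{\omega_1\times(0,T)} e^{2sA}s^3\lambda^3\zeta^3|v|^2 .
$$
On the time interval $[0,T/2]$ the weights do not degenerate. Indeed, $m$ is smooth and positive on $[0,T/2]$, since $m(0)>0$ and $m(t)\ge t^4(T-t)^4>0$ for $t\in(0,T/2]$. Hence $\tau$ is bounded above and below by positive constants there. The function $\eta$ lies between $1$ and $e^{2\lambda|\Psi|_\infty}$, and $A<0$ is bounded below, because $e^{\lambda(|\Psi|_\infty+\Psi)}-e^{3\lambda|\Psi|_\infty}\ge -e^{3\lambda|\Psi|_\infty}$. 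Finally $b\ge m_b:=\min b>0$. Together these give $e^{2sA}(s\lambda)^2\zeta^2b^2\ge c(s,\lambda)>0$ on $(0,1)\times(0,T/2)$. This yields
$$
\int_0^{T/2}\!\!\int_0^1 v^2\,dx\,dt \le C\int_{\omega_1\times(0,T)} e^{2sA}s^3\lambda^3\zeta^3|v|^2 ,
$$
with $s$ and $\lambda$ now fixed, for instance $s=s_0$ and $\lambda=\lambda_0$.

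Second, I would prove a backward energy estimate: for $t\in[0,T/2]$,
$$
\|v(0)\|_{L^2}^2\le C\|v(t)\|_{L^2}^2 .
$$
To do this, multiply the equation in (\ref{adjunto2_texto_art}) by $v$ and integrate over $(0,1)$. The diffusion term contributes $b\int a v_x^2\ge0$, using the boundary conditions in $H^1_a$ to integrate by parts. The drift term $d\sqrt a\,v_xv$ is bounded by $\frac{m_b}{2}\int a v_x^2+C\int v^2$, using that $d$ (playing the role of $B$, or $-B$) is bounded, which was remarked after (\ref{eqprin}) from \textbf{H3}. The zero-order term is bounded by $\|c\|_\infty\int v^2$, since $c$ is bounded by \textbf{H2}--\textbf{H3}. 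This gives $-\frac{d}{dt}\|v(t)\|^2\ge -C\|v(t)\|^2$, that is, $\frac{d}{dt}\big(e^{-Ct}\|v(t)\|^2\big)\le 0$ after adjusting signs for the backward direction. Gronwall's lemma then gives $\|v(0)\|^2\le e^{CT}\|v(t)\|^2$ for every $t$.

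Third, I would integrate this bound over $t\in(0,T/2)$ and combine it with the first step:
$$
\frac{T}{2}\|v(0)\|^2\le e^{CT}\int_0^{T/2}\!\!\int_0^1 v^2\le C\int_{\omega_1\times(0,T)}e^{2sA}s^3\lambda^3\zeta^3|v|^2 ,
$$
which is (\ref{Observability1}). The integration domain in the statement is written as $\int_0^1\int_{\omega_1}$; I read it as $\omega_1\times(0,T)$, matching the Carleman estimate.

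The main obstacle is making the energy estimate rigorous in the degenerate setting. The drift term must be absorbed by the degenerate diffusion $\int a v_x^2$, which is possible precisely because the drift is written as $\sqrt a\,v_x$ with a bounded coefficient. The integration by parts also needs care: the boundary term $a v_x v$ at $x=0$ vanishes for $v\in H^2_a$ in the weakly degenerate case, following \cite{Alabau_cannarsa_fragnelli-06}. I would argue first for regular solutions ($v^T\in H^1_a$) and then pass to $v^T\in L^2$ by density.
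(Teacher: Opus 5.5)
Your strategy is the standard argument the paper relies on; the paper gives no proof beyond calling the result a corollary. You take $H=0$ in Proposition~\ref{prop:carl_weights_t} and keep the zero-order term on $(0,1)\times(0,T/2)$, where $\tau$, $\eta$, $e^{2sA}$ and $b$ are bounded below. You then combine this with an energy estimate $\|v(0)\|^2\le C\|v(t)\|^2$ integrated over $(0,T/2)$. Your first and third steps are correct.

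The energy step, however, is written in the wrong direction. Your displayed inequality $-\frac{d}{dt}\|v\|^2\ge -C\|v\|^2$, i.e.\ $\frac{d}{dt}(e^{-Ct}\|v(t)\|^2)\le 0$, is what the operator with $+v_t$ produces, as literally printed in (\ref{adjunto2_texto_art}). It yields $\|v(t)\|^2\le e^{Ct}\|v(0)\|^2$, not $\|v(0)\|^2\le e^{CT}\|v(t)\|^2$. For that forward operator the latter bound is in fact false, because high frequencies decay; with terminal data the problem is also ill-posed. Your phrase ``after adjusting signs'' therefore has to be made precise. Read (\ref{adjunto2_texto_art}) as the backward adjoint $-v_t-b(t)(av_x)_x-d\sqrt a\,v_x+cv=0$, $v(T)=v^T$, as in (\ref{linealadjunto}). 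Multiplying by $v$ and absorbing the drift term into $b\int_0^1 a v_x^2$ exactly as you propose gives
\[
\frac12\frac{d}{dt}\|v\|^2=b\int_0^1 a v_x^2-\int_0^1 d\sqrt a\,v_x v+\int_0^1 c v^2\ge -C\|v\|^2 .
\]
Hence $\frac{d}{dt}\big(e^{2Ct}\|v(t)\|^2\big)\ge 0$, and so $\|v(0)\|^2\le e^{2CT}\|v(t)\|^2$ for all $t\in[0,T]$. With this correction the argument is complete.

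Your constant depends on $s$ and $\lambda$, through the lower bound of the weight on $(0,T/2)$, and this cannot be avoided. Since $A\le-\tau\,(e^{3\lambda|\Psi|_\infty}-e^{2\lambda|\Psi|_\infty})$ and $\tau$ is bounded below on $[0,T]$, the right-hand side of (\ref{Observability1}) tends to $0$ as $s\to\infty$ for fixed $v$. So the corollary can only hold with $C=C(s,\lambda)$. Your argument gives exactly this for every fixed $s\ge s_0$, $\lambda\ge\lambda_0$, not only for $s=s_0$, $\lambda=\lambda_0$, and that is all the paper uses later.
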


{\bf A null controllability result for the linear system}

The last goal of this section is to establish a result of global null controllability for the linear problem
\begin{equation}\label{eq:linearized_system1}
	\begin{cases}
		z_t-b(t)\left({a}(x)z_x\right)_x-B(x,t)\sqrt{a}z_x+D_3F\left(\ell(t)x,t,\widetilde{y}\right)z=\tilde h \cara_{_{{\omega}_1}}+G, & \ \ \ \text{in} \ \ \ {Q}\\
		z=0, & \ \ \ \text{on} \ \ \ {\Sigma}\\
		z(0)=y_0-\widetilde{y}_0=z_0, & \ \ \ \text{in} \ \ \ \Omega
	\end{cases}
\end{equation}
where $G \in L^2((0,1)\times (0,T))$, $\tilde h\in L^2(\omega_1\times (0,T))$ and $a$ satisfy assumption \textbf{H1}.

%\color{red}
In the following estimates, we need weights that depend only on $t$. Thus, we define
\begin{equation*}
    \begin{cases}
        A^*(t) = \displaystyle\max_{x \in (0,1)} A(x,t), \qquad \hat A(t) = \displaystyle\min_{x \in (0,1)} A(x,t), \\
        \zeta^*(t) = \displaystyle\max_{x \in (0,1)} \zeta(x,t), \qquad \hat \zeta(t) = \displaystyle\min_{x \in (0,1)} \zeta(x,t),
    \end{cases}
\end{equation*}
and we observe that $A^*(t) < 0$, $\hat A(t) < 0$ and that $\zeta^*(t) / \hat \zeta(t)$ does not depend on $t$ and is equal to some constant $\zeta_0 \in \R$. Moreover, if $\lambda$ is sufficiently large, we can suppose
\begin{equation}
    \label{eq:comp_pesos}
    3 A^*(t) < 2 \hat A(t) < 0.
\end{equation}

Let us define 
$$
    \hat \Gamma(\phi) = \int_Q e^{2s \hat A} \left[(s\lambda) \hat \zeta b^2 a |\phi_x|^2 + (s\lambda)^2 {\hat \zeta}^2 b^2 |\phi|^2 \right] dx dt.
$$

Thus, Proposition \ref{prop:carl_weights_t} and Corollary \ref{cor:observability}  imply the following corollary where the weights depend only on $t$.

%\color{blue}
\begin{coro} 
\label{cor:carleman_pesos_t}
    There exist positive constants $C$, $\lambda_0$ and $s_0$ such that, for any $s \geq s_0$, $\lambda \geq \lambda_0$ and any $v^T \in L^2(Q)$, the corresponding solution $\phi$ of \eqref{adjunto2_texto_art} satisfies
        \begin{equation}\label{pesos_t_Carleman for eq:adjoint_optimality_system}
        \begin{split}
            \|v(0)\|^2_{L^2(0,1)} +
            \hat \Gamma(v) \leq & C \left( \int_Q e^{2s A^*} (\zeta^*)^4 |H|^2 dxdt + \int_{\omega_1 \times(0,T)} e^{2s A^*} (\zeta^*)^8 |v|^2 dxdt\right).
        \end{split}
    \end{equation}

    \begin{equation}\label{pesos_t_Carleman for eq:adjoint_optimality_system2}
	\begin{split}
		\|v(0)\|^2_{L^2(0,1)} +
		\hat \Gamma(v) \leq & C \left( \int_Q e^{2s A^*}  |H|^2 dxdt + \int_{\omega_1 \times(0,T)} e^{2s A^*} (\zeta^*)^3 |v|^2 dxdt\right).
	\end{split}
\end{equation}
\end{coro}
%\color{red}

Let us define the weights:
%-------------------------------------------------------
\begin{equation}\label{eq:weights_rhos}
	\left\{ \begin{array}{l}
		\rho_0 = e^{-sA^*} , \qquad   \rho_1 = e^{-sA^*}  (\zeta^*)^{-3/2},\\  \rho_2 = e^{-3sA^*/2}  \hat \zeta^{-1},  \qquad \hat{\rho} = e^{-sA^*} (\zeta^*)^{-3/4}, \\
        \overline{\rho}=e^{-sA^*}  (\zeta^*)^{-5/2},
	\end{array}\right.
\end{equation}
which satisfy
\begin{equation}\label{eq:compara_rhos}
     \overline \rho \leq C\rho_{1}\leq C \hat{\rho}\leq C\rho_{0}\leq C\rho_{2}, \qquad \hat{\rho}^{2}=\rho_{1}\rho_{0} \qquad \text{and} \qquad \rho_2 \leq C \rho_1^2.   
\end{equation}

%\color{blue}
In particular, Corollary \ref{cor:carleman_pesos_t} and \eqref{eq:comp_pesos} imply 
\begin{equation}
    \label{eq:carleman_simples}
    \begin{split}
        \|v(0)\|^2_{L^2(0,1)} +
        \int_Q \rho_2^{-2} |v|^2 dxdt
        \leq & \ C \left( \int_Q \rho_0^{-2} |H|^2 dxdt 
        + \int_{\omega_1 \times(0,T)} \rho_1^{-2} |v|^2 dxdt\right).
    \end{split}
\end{equation}
%\color{red}

In the following theorem we show the global null controllability of the linearized system \eqref{eq:linearized_system1}. In particular, since the weight $\rho_0$ blows up at $t=T$, \eqref{estimate for solution} shows that $z(\cdot,T)=0$ in $[0,1]$.

\begin{teo}%\label{theorem case linear}
\label{prop:linear_control}
    If $z_{0}\in L^{2}(0,1)$, $\rho_2 G \in L^2(Q)$, then there exists a control $\tilde h\in L^{2}(\omega_1 \times (0,T))$ with associated state $z \in C^{0}([0,T];L^{2}(0,1))\cap L^{2}(0,T;H^{1}_{a}(0,1))$, solution of \eqref{eq:linearized_system1}, such that
    \begin{equation}\label{estimate for solution}
        \int_Q \rho_0^2 |z|^2 dxdt + \int_{\omega_1 \times (0,T)} \rho_1^2 |\tilde h|^2dxdt \leq C \kappa_{0}(G,z_{0}),   
    \end{equation}
    where $\kappa_{0}(G,z_{0})= \|\rho_2 G\|^2_{L^2(Q)} + \|z_0\|^2_{L_2(0,1)}$. In particular, $z(x,T)=0$, for all $x\in [0,1]$. 
\end{teo}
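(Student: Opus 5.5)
We follow the Fursikov--Imanuvilov duality approach in the version used by Imanuvilov, Puel and Yamamoto. The idea is to build the controlled state and control as the solution of a Lax--Milgram problem, posed in a space of adjoint-type functions that is adapted to the Carleman estimate \eqref{eq:carleman_simples}.

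Denote by $L$ the operator of \eqref{eq:linearized_system1}, namely $Lz = z_t-b(t)(a z_x)_x-B\sqrt a z_x+D_3F(\ell(t)x,t,\widetilde y)z$. Its formal adjoint is
\[
L^*\varphi=-\varphi_t-b(t)(a\varphi_x)_x+B\sqrt a\varphi_x+c(x,t)\varphi .
\]
This is the operator of \eqref{linealadjunto}, which, after the time reversal $t\mapsto T-t$, has the form \eqref{adjunto2_texto_art}. Let $P_0$ be the space of smooth functions $\varphi$ on $\overline Q$ with $\varphi=0$ on $\Sigma$ and the appropriate regularity in the degenerate spaces, and define on $P_0$ the bilinear form
\[
k(\varphi,\psi)=\int_Q \rho_0^{-2}L^*\varphi\,L^*\psi\,dxdt+\int_{\omega_1\times(0,T)}\rho_1^{-2}\varphi\psi\,dxdt .
\]

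The first step is to show that $k$ is a scalar product. Indeed, \eqref{eq:carleman_simples} with $H=L^*\varphi$ gives
\[
\|\varphi(0)\|^2_{L^2(0,1)}+\int_Q\rho_2^{-2}|\varphi|^2\le C\,k(\varphi,\varphi),
\]
so $k(\varphi,\varphi)=0$ forces $\varphi=0$. Let $P$ be the completion of $P_0$ under $k$; the same inequality extends to all of $P$.

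The second step is to check that the linear form
\[
\ell_0(\psi)=\int_Q G\psi\,dxdt+\int_0^1 z_0\,\psi(0)\,dx
\]
is bounded on $P$. By Cauchy--Schwarz,
\[
|\ell_0(\psi)|\le \|\rho_2G\|_{L^2(Q)}\,\|\rho_2^{-1}\psi\|_{L^2(Q)}+\|z_0\|\,\|\psi(0)\|\le C\,\kappa_0(G,z_0)^{1/2}\,k(\psi,\psi)^{1/2}.
\]
Lax--Milgram then gives a unique $\hat\varphi\in P$ with $k(\hat\varphi,\psi)=\ell_0(\psi)$ for all $\psi\in P$, together with $k(\hat\varphi,\hat\varphi)\le C\kappa_0$.

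The third step defines the candidates
\[
\hat z=\rho_0^{-2}L^*\hat\varphi,\qquad \tilde h=-\rho_1^{-2}\hat\varphi\,\cara_{\omega_1}.
\]
Directly from the definitions,
\[
\int_Q\rho_0^2|\hat z|^2+\int_{\omega_1\times(0,T)}\rho_1^2|\tilde h|^2=k(\hat\varphi,\hat\varphi)\le C\kappa_0,
\]
which is \eqref{estimate for solution}. In particular $\tilde h\in L^2$ and $\hat z\in L^2(Q)$.

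The fourth step identifies $\hat z$ with the weak solution. Let $z$ be the unique weak solution of \eqref{eq:linearized_system1} with this $\tilde h$, whose existence and regularity $C^0([0,T];L^2)\cap L^2(0,T;H^1_a)$ come from standard semigroup theory for the degenerate operator in the weakly degenerate setting of \cite{Alabau_cannarsa_fragnelli-06}. For a given $g\in L^2$ with compact support in time, solve $L^*\psi=g$, $\psi(T)=0$. The variational identity then gives
\[
\int_Q \hat z\,g=\int_Q G\psi+\int_{\omega_1\times(0,T)}\tilde h\psi+\int_0^1 z_0\psi(0),
\]
which is exactly the duality characterization of the weak solution. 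Hence $\hat z=z$.

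Finally, $\rho_0=e^{-sA^*}$ blows up at $t=T$ because $A^*<0$ and $\tau\to\infty$ there. Since $\int_Q \rho_0^2|z|^2<\infty$ and $z\in C^0([0,T];L^2)$, this forces $z(\cdot,T)=0$.

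The main obstacle is this fourth step. One has to justify that such test functions $\psi$ belong to $P$ and may be used in the variational identity. This requires approximation in the weighted degenerate setting: regularity of $\psi$ near the degeneracy point $x=0$, membership in $H^2_a$, and the boundary terms $a\psi_x z$ vanishing at $x=0$ by the weak-degeneracy hypothesis $K<1$. I would first handle smooth $g$ supported in $(0,T)$, where the weights are bounded, and then pass to the limit by density.
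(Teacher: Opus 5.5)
Your construction is the paper's own: the same form $k$, the same functional, the same choices $\hat z=\rho_0^{-2}L^*\hat\varphi$ and $\tilde h=-\rho_1^{-2}\hat\varphi\,\cara_{\omega_1}$, and the same bound $k(\hat\varphi,\hat\varphi)\le C\kappa_0$. The paper disposes of your fourth step with the single sentence ``$z$ is a solution by transposition''. You are right that this is where the work lies. However, the repair you sketch cannot succeed if $P_0$ consists of functions smooth up to $x=0$, as your wording ``smooth functions on $\overline Q$'' and the paper's $\mathcal P_0\subset C^2(\overline Q)$ suggest. For such $\varphi$ one has $(a\varphi_x)(0,t)=a(0)\varphi_x(0,t)=0$, and this extra condition survives completion.

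To see this, take $w=\gamma(t)\chi(x)$ with $\gamma\in C_c^\infty(0,T)$, $\chi$ smooth, $\chi\equiv1$ near $0$ and $\chi\equiv0$ near $1$, and set $\Lambda_\gamma(\varphi)=\int_Q L^*\varphi\,w-\int_Q\varphi\,Lw$. Since $\rho_0w,\ \rho_2Lw\in L^2(Q)$, \eqref{eq:carleman_simples} gives $|\Lambda_\gamma(\varphi)|\le C_\gamma\,k(\varphi,\varphi)^{1/2}$. Integrating by parts shows $\Lambda_\gamma=0$ on $P_0$, hence on $P$. For a solution of $L^*\psi=g$, $\psi(T)=0$, however, one gets $\Lambda_\gamma(\psi)=\int_0^T b(t)\gamma(t)(a\psi_x)(0,t)\,dt$. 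In the weakly degenerate case this is nonzero in general, however smooth $g$ is: for $a=x^\alpha$, solutions behave like $c(t)x^{1-\alpha}$ near $0$. So no density argument puts $\psi$ in $P$. Worse, for every $\varphi\in P_0$ the Green boundary term at $x=0$, $b\,[(az_x)\varphi-(a\varphi_x)z]_{x=0}$, vanishes whatever $z(0,t)$ is. The identity $k(\hat\varphi,\varphi)=\ell_0(\varphi)$ therefore carries no information on $\hat z(0,t)$; the trace at the degenerate end acts like an extra free control.

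The fix is to build $P_0$ from the degenerate spaces rather than from smooth functions, e.g.
$P_0=\{\varphi\in L^2(0,T;H^2_a(0,1)) : \varphi_t\in L^2(Q)\}$, where the Dirichlet conditions are built into $H^2_a\subset H^1_a$.

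\begin{itemize}
\item Each such $\varphi$ solves the adjoint system with $H=L^*\varphi\in L^2(Q)$ and $\varphi(T)\in H^1_a$. So \eqref{eq:carleman_simples} applies and $k$ is still a scalar product.
\item $k(\varphi,\varphi)<\infty$ because $\rho_0^{-1}$ and $\rho_1^{-1}$ are bounded.
\item For every $g\in L^2(Q)$, the solution $\psi$ of $L^*\psi=g$, $\psi(T)=0$, lies in $P_0$ itself by well-posedness (maximal regularity) of the backward degenerate problem. Your transposition identity then holds for all $g$ with no approximation step.
\item The Green formula $\int_0^1(au_x)_xv=-\int_0^1au_xv_x$ for $u\in H^2_a$, $v\in H^1_a$ (valid since $K<1$) shows that the weak solution $z$ satisfies the same identity, whence $\hat z=z$.
\end{itemize}

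The rest of your argument, including $z(\cdot,T)=0$ from the blow-up of $\rho_0$ at $t=T$, is fine.
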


\begin{proof}
Let us denote by
$$
    L\varphi = \varphi_t-b(t)\left({a}(x) \varphi_x\right)_x -B(x,t)\sqrt{a} \varphi_x+D_3F\left(\ell(t)x,t,\widetilde{y}\right)\varphi.
$$
Then, we define 
\begin{equation*}
    \begin{array}{l}
        \mathcal{P}_{0} = \{ \phi \in C^{2}(\overline{Q})^{3} \ : \ \phi(0,t)=\phi(1,t) = 0\ \text{a.e in}\ (0,T)\}
    \end{array}
\end{equation*}
and the application $b:\mathcal{P}_{0}\times \mathcal{P}_{0}\rightarrow \mathbb{R}$ given by
\begin{equation}
\label{eq:def_b}
    \begin{array}{l}
          b(\tilde{\phi},\phi) = \displaystyle\int_{Q}\rho_{0}^{-2}(L^{\ast}\tilde{\phi})(L^{\ast}\phi)\ dx\ dt + \displaystyle\int_{\omega_1 \times (0,T)}\rho_{1}^{-2}\tilde{\phi}\phi\ dx\ dt,\,\,\ \ \forall \phi,\tilde{\phi} \in \mathcal{P}_{0},
    \end{array}
\end{equation}
which is bilinear on $\mathcal{P}_{0}$ and defines an inner product. Indeed, taking $\tilde{\phi}=\phi$ in \eqref{eq:def_b}, we have that, by \eqref{eq:carleman_simples}  $b(\cdot,\cdot)$ is positive definite. The other properties are straightforwardly verified.

Let us consider the space $\mathcal{P}$ the completion of $\mathcal{P}_{0}$ for the norm associated to $b(\cdot,\cdot)$ (which we denote by $\|.\|_{\mathcal{P}}$). Then, $b(\cdot,\cdot)$ is symmetric, continuous and coercive bilinear form on $\mathcal{P}$.

Now, let us define the functional linear $\ell :\mathcal{P}\rightarrow\mathbb{R}$ as
\begin{equation*}
    \langle\ell, \phi\rangle = \displaystyle\int_{0}^{1}y_{0}\phi(0)dx + \displaystyle\int_{Q}(H\phi)dx dt.
\end{equation*}

Note that $\ell$ is a bounded linear form on $\mathcal{P}$. Indeed, applying the classical Cauchy-Schwartz inequality 
and using \eqref{eq:carleman_simples}, we get
\begin{equation}\label{l limitado}
    \begin{array}{l}
           |\langle\ell, \phi\rangle| \leq |y_{0}|_{L^{2}(0,1)}|\phi(0)|_{L^{2}(0,1)} + |\rho_{2}H|_{L^{2}(Q)}|\rho_{2}^{-1}\phi|_{L^{2}(Q)} \\
           \leq C(|y_{0}|^{2}_{L^{2}(0,1)} + |\rho_{2}H|^{2}_{L^{2}(Q)} )^{1/2}\left(b(\phi,\phi)\right)^{1/2}\\
         \leq C(|y_{0}|^{2}_{L^{2}(0,1)} + |\rho_{2}H|^{2}_{L^{2}(Q)} )^{1/2}\| \phi\|_{\mathcal{P}},
    \end{array}
\end{equation}
for all $\phi \in\mathcal{P}$. Consequently, in view of Lax-Milgram's theorem, there is only one $\hat{\phi} \in \mathcal{P}$ satisfying
\begin{equation}\label{eq: por Lax-M.}
    b(\hat{\phi},\phi) = \langle\ell, \phi\rangle,\,\,\ \ \forall \phi \in\mathcal{P}.
\end{equation}

Let us set
\begin{equation}\label{definição de y, pi, h}
    \left\{\begin{array}{lll}
          z = \rho_{0}^{-2}(L^{\ast}\hat{\phi})&\text{in} & Q,\\
          %p^{i} = \rho_{0}^{-2}(L\hat{\psi^{i}} + \dfrac{1}{\mu_{i}}\hat{\phi}1_{O_{i}}),\, i=\{1,2\} &\text{in} & Q,\\
          \tilde h = -\rho_{1}^{-2}\hat{\phi}1_{\omega_1} &\text{in} & Q.
    \end{array}\right.
\end{equation}

Then, replacing \eqref{definição de y, pi, h} in \eqref{eq: por Lax-M.} we have
\begin{equation*}
    \begin{array}{l}
         \displaystyle\int_{Q} z B\,dx\,dt 
         = \displaystyle\int_{0}^{1}z_{0}\phi(0)dx + \displaystyle\int_{\omega_1 \times(0,T)} \tilde h \phi\,dx\,dt+\displaystyle\int_{Q}(H\phi)dx\ dt,
    \end{array}
\end{equation*}
where $\phi$ is a solution of the system
\begin{equation*}
    \left\{\begin{array}{lll}
       L^{\ast}\phi = B   &\text{in}& Q,  \\
       \phi(0,t)=\phi(1,t)=0 & \text{on} & (0,T), \\
       \phi(\cdot,T)=0,\ &\text{in}& \Omega.
   \end{array}\right.
\end{equation*}

Therefore, $z$ is a solution by transposition of \eqref{eq:linearized_system1}. Also, since $\hat{\phi} \in \mathcal{P}$ and 
$H \in L^{2}(Q)$, using, for instance, the well-posedness result of Appendix A in \cite{GYL-Carleman-2025}
applied to a linear equation, we obtain
$$
    z \in C^{0}([0,T];L^{2}(0,1))\cap L^{2}(0,T;H^{1}_{a}(0,1)).
$$

Moreover, from \eqref{l limitado} and \eqref{eq: por Lax-M.}
\begin{equation*}
    \left(b(\hat{\phi},\hat{\phi})\right)^{1/2}  \leq C(|z_{0}|^{2}_{L^{2}(0,1)} + |\rho_{2}G|^{2}_{L^{2}(Q)})^{1/2},
\end{equation*}
that is,
\begin{equation*}
    \begin{array}{l}
        \displaystyle\int_{Q}\rho_{0}^{2}|y|^{2}\ dxdt    +\displaystyle\int_{\omega_1 \times (0,T)}\rho_{1}^{2}|\tilde h|^{2} dx dt
        \leq C(\|z_{0}\|^{2}_{L^{2}(0,1)} + \|\rho_{2}H\|^{2}_{L^{2}(Q)} ),
    \end{array}
\end{equation*}
proving \eqref{estimate for solution}.

\end{proof}

%\color{black} 
The next result, stated below, is a consequence of Theorem \ref{prop:linear_control}. Arguing in a similar way, we will establish the regularity of the control $h$, which will be very useful to work out estimates involving multiplicative control in Section 4. The aforementioned regularity is important to control the problem (\ref{nolineal}).

\begin{propo}
Let $\overline{\rho}=e^{-sA^*}  (\zeta^*)^{-5/2}$. Then, one has
\begin{equation}
		\label{cr1}
		\overline{\rho} \tilde{h} \in \mathsf{U}:=L^{2}(0,T;H^{1}_{0}(0,1) \cap H^{2}(0,1)) \cap L^{\infty}(0,T;H^{1}_{0}(0,1)) \qquad and
	\end{equation}
	\vspace{-0.5cm}
	\begin{equation}
		\label{cr2}
		\!\!\!\!\!\!\!\!\!\!\!\!\!\!\!\!\!\!\!\!\!\!\!\Vert \overline{\rho} \tilde{h}\|_{\mathsf{U}}\leq C\left(\Vert z_{0}\Vert_{L^2(0,1)} +\Vert \rho_2 G\Vert_{L^2(Q)} \right),
	\end{equation}
	where $$\Vert \cdot \Vert_{\mathsf{U}} =\Vert \cdot \Vert_{L^{2}(0,T;H^{1}_{0}(0,1) \cap H^{2}(0,1))}+\Vert \cdot \Vert_{L^{\infty}(0,T;H^{1}_{0}(0,1))}$$
\end{propo}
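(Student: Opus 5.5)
From the construction in Theorem \ref{prop:linear_control}, the control is $\tilde h=-\rho_1^{-2}\hat\phi\,\cara_{\omega_1}$, where $\hat\phi$ is the Lax--Milgram minimizer. By density, $\hat\phi$ solves the adjoint-type problem
$$
L\hat\phi=\rho_0^{2}z,\qquad \hat\phi(0,t)=\hat\phi(1,t)=0.
$$
Here $L$ is the backward adjoint operator, with no final condition imposed in $\mathcal P$; the relevant weighted version vanishes at $t=T$. So
$$
\overline\rho\,\tilde h=-\overline\rho\rho_1^{-2}\hat\phi\,\cara_{\omega_1}.
$$
The plan is to show that the function $w:=\overline\rho\rho_1^{-2}\hat\phi$, cut off to a neighborhood of $\omega_1$, has the parabolic regularity $L^2(H^2)\cap L^\infty(H^1_0)$. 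Its norm will be bounded by $b(\hat\phi,\hat\phi)^{1/2}$, which by the final estimate in the proof of Theorem \ref{prop:linear_control} is $\le C(\|z_0\|_{L^2}+\|\rho_2G\|_{L^2(Q)})$.

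Concretely, I will proceed as follows. Since $\omega_1\subset\subset\omega\subset(0,1)$ lies away from the degeneracy point $x=0$, I choose $\chi\in C_c^\infty(\omega)$ with $\chi\equiv1$ on $\omega_1$. On the support of $\chi$, the operator $(a\,\cdot_x)_x$ is uniformly elliptic with $C^1$ coefficients. Since $\overline\rho\rho_1^{-2}=e^{sA^*}(\zeta^*)^{1/2}$ depends only on $t$ and vanishes at $t=0$ (this uses $m(0)>0$; otherwise the weight is only bounded there) and at $t=T$, I set $w=\chi\,e^{sA^*}(\zeta^*)^{1/2}\hat\phi$. This $w$ satisfies a nondegenerate backward parabolic equation
$$
-w_t-b(t)(a w_x)_x+B\sqrt a\,w_x+cw=f,
$$
with homogeneous Dirichlet data and $w(T)=0$. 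The right-hand side is
$$
f=\chi\, e^{sA^*}(\zeta^*)^{1/2}\rho_0^{2}z+(e^{sA^*}(\zeta^*)^{1/2})_t\,\chi\hat\phi+\text{commutator terms involving }\chi',\chi''\text{ times }\hat\phi,\ \hat\phi_x .
$$
Standard $L^2$ maximal regularity for nondegenerate equations (multiply by $-w_{xx}$ and by $w_t$, then use Gronwall; the coefficients $b,B,c$ are bounded, with $b\ge m>0$) then gives
$$
\|w\|_{L^2(H^2)\cap L^\infty(H^1_0)}\le C\|f\|_{L^2(Q)}.
$$

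It remains to bound $\|f\|_{L^2}$ by $b(\hat\phi,\hat\phi)^{1/2}$. This is done with weight comparisons.
\begin{itemize}
\item For the first term, $e^{sA^*}(\zeta^*)^{1/2}\rho_0^{2}=e^{-sA^*}(\zeta^*)^{1/2}\le C\rho_0\cdot\rho_0^{-1}\cdots$, and I compare it with $\rho_0^{-1}$ times $\rho_0^{2}z$. Since $\rho_0^{-1}L\hat\phi=\rho_0 z\in L^2$, the bound reduces to $e^{sA^*}(\zeta^*)^{1/2}\rho_0\le C$, which holds because $\rho_0=e^{-sA^*}$ and $e^{sA^*}\rho_0=1$. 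The factor $(\zeta^*)^{1/2}$ is not yet absorbed; to handle it I will use $\hat\Gamma$ from Corollary \ref{cor:carleman_pesos_t}, which controls $e^{s\hat A}\hat\zeta\,\hat\phi$, together with \eqref{eq:comp_pesos}, so that $e^{sA^*}$ times any power of $\zeta^*$ is dominated by $e^{s\hat A}$ times a lower power.
\item For the terms with $\hat\phi,\hat\phi_x$, the time derivative gives $|(e^{sA^*}(\zeta^*)^{1/2})_t|\le Ce^{sA^*}(\zeta^*)^{3/2+1/4}$, and the estimate again follows from \eqref{eq:carleman_simples} and $\hat\Gamma$, using $3A^*<2\hat A$. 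The gradient term is controlled on $\operatorname{supp}\chi'$, where $a\ge a_0>0$, by the $(s\lambda)\hat\zeta b^2a|\hat\phi_x|^2$ part of $\hat\Gamma$.
\end{itemize}

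The main obstacle is this bookkeeping of weights. The exponent $-5/2$ in $\overline\rho$ is exactly what must make all powers of $\zeta^*$ coming from time derivatives fit under $\hat\Gamma$ and \eqref{eq:carleman_simples}. A possible alternative is to apply the energy estimate directly in weighted form with multiplier $\overline\rho^2\rho_1^{-4}$.
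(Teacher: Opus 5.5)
Your route is the same as the paper's. With $\widetilde\rho:=\overline\rho\rho_1^{-2}=e^{sA^*}(\zeta^*)^{1/2}$ you write the backward equation for $\widetilde\rho\hat\phi$, whose sources are $\widetilde\rho\rho_0^2 z$ and $\widetilde\rho_t\hat\phi$, and apply $L^2$ parabolic regularity. The cutoff $\chi$ away from $x=0$ is your addition, and a sensible one: a global regularity argument for the degenerate operator gives $H^2_a$, not $H^2$.

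The gap is the weight bookkeeping, which does not close.

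\emph{The $z$-source.} This term is
$\chi\widetilde\rho\rho_0^2 z=\chi(\zeta^*)^{1/2}\rho_0 z=\chi(\zeta^*)^{1/2}\rho_0^{-1}L^*\hat\phi$.
The strongest information available is $\rho_0 z\in L^2(Q)$; Proposition \ref{addicional_estimates_case_linear} only puts the weaker weights $\hat\rho,\rho_1$ on $z$. $\hat\Gamma$ cannot help, since it controls $\hat\phi,\hat\phi_x$ and not $L^*\hat\phi$.

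\emph{The domination you invoke goes the wrong way.} Since $\hat A\le A^*<0$, we have $e^{s\hat A}\le e^{sA^*}$. Moreover $e^{s(A^*-\hat A)}=e^{s\tau(t)(\max\eta-\min\eta)}$ blows up exponentially as $t\to T$, so no power of $\zeta^*$ compensates. What \eqref{eq:comp_pesos} actually gives is $e^{3sA^*/2}\le e^{s\hat A}$. That is exactly why the Carleman estimate controls $\hat\phi$ on $Q$ only through $\rho_2^{-1}=e^{3sA^*/2}\hat\zeta$, cf.\ \eqref{eq:carleman_simples}.

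Consequently, on $\mathrm{supp}\,\chi\setminus\omega_1$ the following terms are out of reach:
\begin{itemize}
\item $\widetilde\rho_t\chi\hat\phi$, of size $s\,e^{sA^*}(\zeta^*)^{7/4}|\hat\phi|$;
\item the commutators $e^{sA^*}(\zeta^*)^{1/2}\chi'\hat\phi_x$ and $e^{sA^*}(\zeta^*)^{1/2}\chi''\hat\phi$.
\end{itemize}
The only place where $\hat\phi$ carries the large weight is $\omega_1$ itself, through
$\int_{\omega_1\times(0,T)}\rho_1^{-2}|\hat\phi|^2=\int_{\omega_1\times(0,T)}e^{2sA^*}(\zeta^*)^3|\hat\phi|^2\le b(\hat\phi,\hat\phi)$.
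Even there, $\widetilde\rho_t\hat\phi$ exceeds this by a factor $(\zeta^*)^{1/4}$.

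An argument of this type does close under three changes:
\begin{itemize}
\item all cutoffs are supported inside $\omega_1$;
\item the gradient is recovered by a local energy (Caccioppoli) estimate: multiplying $L^*\hat\phi=\rho_0^2 z$ by $e^{2sA^*}(\zeta^*)^{3/2}\chi_1^2\hat\phi$ gives $e^{sA^*}(\zeta^*)^{3/4}\hat\phi_x\in L^2$ where $\chi_1=1$;
\item the time weight is $e^{sA^*}(\zeta^*)^{k}$ with $k\le 0$.
\end{itemize}
What you then obtain is the regularity of $e^{-sA^*}(\zeta^*)^{k-3}\tilde h$, e.g.\ with $(\zeta^*)^{-3}$ instead of $(\zeta^*)^{-5/2}$. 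It also holds only on compact subsets of $\omega_1$. Indeed, since $\tilde h$ contains the indicator of $\omega_1$, $\overline\rho\tilde h$ is in general not in $H^1_0(0,1)$ unless the control is rebuilt with a smooth cutoff.

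Following the paper will not fill the gap. It asserts $|\widetilde\rho\rho_0^2 z|\le C|\rho_0 z|$ and $|\widetilde\rho_t\hat\phi|\le C|\rho_0^{-1}\hat\phi|$ and then appeals to the Carleman estimate. With the weights of \eqref{eq:weights_rhos}, these two bounds are off by $(\zeta^*)^{1/2}$ and $s(\zeta^*)^{7/4}$ respectively. In addition, the Carleman estimate controls $\hat\phi$ only with $\rho_2^{-1}$, which is smaller than $\rho_0^{-1}$ by the exponentially large factor $e^{-sA^*/2}\hat\zeta^{-1}$.

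A minor point: since $m(0)>0$, your weight does not vanish at $t=0$. This is harmless, because only $w(T)=0$ is used.
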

\begin{proof}
 Let us set $\widetilde{\rho}:=e^{sA^*}  (\zeta^*)^{1/2}$. Proceeding as in the proof of Theorem \ref{prop:linear_control} for $\overline{\hat{\phi}}:=\widetilde{\rho} \hat{\phi}$ in \eqref{definição de y, pi, h}  one has
\begin{equation}\label{lag.1}
	\left\{\begin{array}{ll}
		- (\widetilde{\rho} \hat{\phi})_{t} - b(t)\left({a}(x)(\widetilde{\rho} \hat{\phi})_x\right)_x+ \left(B(x,t)\sqrt{a}(\widetilde{\rho} \hat{\phi})\right)_x+D_3F\left(\ell(t)x,t,\widetilde{y}\right)(\widetilde{\rho} \hat{\phi}) & \  \\
		
		 \quad	= - \widetilde{\rho}\rho_{0}^{2} z - \widetilde{\rho}_{t} \hat{\phi}  & \text{in} \quad Q,\\
		\widetilde{\rho}\hat{\phi}  = 0     ,                             & \text{on} \quad \Sigma,\\
		\widetilde{\rho} \hat{\phi}(T) =0    ,                                  & \text{in} \quad \Omega , \\
		\widetilde{\rho}\hat{\phi} = - \widetilde{\rho}\rho_{1}^{2} \tilde{h}    & \text{in} \quad (x,t)\times \omega_1.
	\end{array}\right.
\end{equation}

It is not difficult to check that $\vert \widetilde{\rho}\rho_{0}^{2} z\vert \leq C \vert \rho_0 z\vert$ and $\vert \widetilde{\rho}_{t} \hat{\phi}\vert \leq C\vert \rho_{0}^{-1} \hat{\phi}\vert$.

So, from Theorem \ref{prop:linear_control}, one has 
$$\Vert \widetilde{\rho}\rho_{0}^{2} z \Vert_{L^2 (Q)}^2 \leq C\left( \Vert z_0\Vert_{L^2 (0,1)}^{2}+\Vert \rho_{2} G \Vert_{L^2(Q)}^{2} \right)$$
and from Carleman estimate, we can deduce
$$\Vert \widetilde{\rho}_{t} \hat{\phi} \Vert_{L^2 (Q)}^2 \leq C\left( \Vert z_0\Vert_{L^2 (0,1)}^{2}+\Vert \rho_{2} G \Vert_{L^2(Q)}^{2} \right)\,.$$
Therefore,
$$(\widetilde{\rho}\rho_{0}^{2} z,  \widetilde{\rho}_{t} \hat{\phi}) \in [L^2 (Q)]^2 \quad \text{and}$$
\begin{equation}\label{lag.2}
 \Vert \widetilde{\rho}\rho_{0}^{2} z \Vert_{L^2 (Q)}^2 + \Vert \widetilde{\rho}_{t} \hat{\phi} \Vert_{L^2 (Q)}^2 \leq C\left( \Vert z_0\Vert_{L^2 (0,1)}^{2}+\Vert \rho_{2} G \Vert_{L^2(Q)}^{2} \right)\,.
 \end{equation}

 In view of \eqref{lag.1}, \eqref{lag.2} and parabolic regularity, we have that
 $$\widetilde{\rho}\hat{\phi} \in L^{2}(0,T;H^{1}_{0}(0,1) \cap H^{2}(0,1)) \cap L^{\infty}(0,T;H^{1}_{0}(0,1))\,.$$

 Now, from \eqref{lag.1} one has
 \begin{equation}\label{rc}
 \begin{split}
    \widetilde{\rho}\hat{\phi} =& - \widetilde{\rho}\rho_{1}^{2} \tilde{h}   \\
        =& -(e^{sA^*}  (\zeta^*)^{1/2})\cdot (e^{-2sA^*}  (\zeta^*)^{-3}) \tilde{h}\\
        =& (e^{-sA^*}  (\zeta^*)^{-5/2}) \tilde{h}\\
    =&  -\overline{\rho} \tilde{h} .  
 \end{split}
\end{equation}

 Therefore,
 $$\overline{\rho} \tilde{h} \in L^{2}(0,T;H^{1}_{0}(0,1) \cap H^{2}(0,1)) \cap L^{\infty}(0,T;H^{1}_{0}(0,1))$$
 and
 $$\Vert \overline{\rho} \tilde{h} \Vert_{L^2(0,T; H^1_0 (0,1) \cap H^2 (0,1))} + \Vert \overline{\rho} \tilde{h}  \Vert_{L^\infty(0,T; H^1_0(0,1))} \leq C\left( \Vert z_0\Vert_{L^2 (0,1)}^{2}+\Vert \rho_{2} G \Vert_{L^2(Q)}^{2} \right)\,. $$
\end{proof}

\noindent{\bf Additional estimates}

In order to get the local null controllability of the nonlinear equation, we need the following additional estimates.

\begin{propo}
    \label{addicional_estimates_case_linear}
    Under the hypothesis of Proposition \ref{prop:linear_control}, we have, furthermore, that the control $\tilde h\in L^{2}(\omega_1 \times (0,T))$ and the associated states $z \in C^{0}([0,T];L^{2}(0,1))\cap L^{2}(0,T;H^{1}_{a}(0,1))$, solution of \eqref{eq:linearized_system1}, satisfies the additional estimates 
    \begin{equation}\label{des Proposition 5}
        \begin{array}{c}
            \displaystyle\sup_{t \in [0,T]}(\hat{\rho}^{2}\|z\|^{2}_{L^{2}(0,1)})        +\displaystyle\int_{Q}\hat{\rho}^{2} a(x)(|z_{x}|^{2} )dxdt\ \leq C \kappa_{0}(G,z_{0}),  
        \end{array}
    \end{equation}
    where $\kappa_{0}(G,z_{0})= \|\rho_2 G\|^2_{L^2(Q)} + \|z_0\|^2_{L_2(0,1)}$.
    Moreover, if $z_{0}\in H^{1}_{a}(0,1)$ 
    \begin{equation}\label{des Proposition 6}
        \begin{array}{c}
            \displaystyle\sup_{[0,T]}(\rho_{1}^{2}\|\sqrt{a}z_{x} \|^{2}_{L^{2}(0,1)})
            + \displaystyle\int_{Q}\rho_{1}^{2}(|z_{t}|^{2} + |(a(x)z_{x})_{x}|^{2} )dxdt
            \leq C \kappa_{1}(G,z_{0}),  
        \end{array}
    \end{equation}
    where $\kappa_{1}(G,z_{0})= \|\rho_2 G\|^2_{L^2(Q)} + \|z_0\|^2_{H^{1}_{a}(0,1)}$. 
\end{propo}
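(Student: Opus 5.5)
We prove both estimates with weighted energy estimates: multiply the state equation by a weight that depends only on $t$ and absorb every weight derivative using the bound from Theorem \ref{prop:linear_control}. Write $\kappa_0=\kappa_0(G,z_0)$. By Theorem \ref{prop:linear_control} we already have
\[
\int_Q \rho_0^2|z|^2+\int_{\omega_1\times(0,T)}\rho_1^2|\tilde h|^2\le C\kappa_0 .
\]
The key pointwise facts about the weights are the following. Since $A^*(t)=\tau(t)\,c_\lambda$ with $c_\lambda<0$ and $\zeta^*=\tau\,\eta^*$, we have $|\tau'|\le C\tau^{5/4}$ near $t=T$, while $\tau$ is bounded near $t=0$ because $m(0)>0$. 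Hence
\[
|(\hat\rho^2)_t|\le C s\,\hat\rho^2(\zeta^*)^{5/4}\le C\rho_0^2,
\qquad
|(\rho_1^2)_t|\le C\hat\rho^2 ,
\]
using $\hat\rho^2=\rho_0^2(\zeta^*)^{-3/2}$ and $\rho_1^2=\hat\rho^2(\zeta^*)^{-3/2}$, with $s,\lambda$ fixed. In the same way, \eqref{eq:compara_rhos} gives $\hat\rho\le C\rho_1$ up to the factor $(\zeta^*)^{3/4}$, and $\hat\rho\le C\rho_2^{-1}$ — more precisely $\hat\rho\rho_2$ is bounded — since $\rho_2=e^{-3sA^*/2}\hat\zeta^{-1}$ and $\hat\rho\,\rho_2\to$ a controlled quantity by \eqref{eq:comp_pesos}. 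All of these are routine checks.

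\textbf{Step 1 (estimate \eqref{des Proposition 5}).} Multiply \eqref{eq:linearized_system1} by $\hat\rho^2 z$ and integrate over $(0,1)$. This is justified for regular data by the well-posedness result quoted above, and then by density. We obtain
\[
\frac12\frac{d}{dt}\int_0^1\hat\rho^2|z|^2+b(t)\hat\rho^2\int_0^1 a|z_x|^2
=\frac12(\hat\rho^2)_t\int_0^1|z|^2+\hat\rho^2\int_0^1\big(B\sqrt a\,z_x z-D_3F\,z^2+\tilde h\cara_{\omega_1}z+Gz\big).
\]
We bound the terms on the right as follows.
\begin{itemize}
\item Since $B$ is bounded, $|B\sqrt a\,z_xz|\le\frac m4 a|z_x|^2+C|z|^2$, and the first part is absorbed by $b\ge m$.
\item The term with $D_3F$ is bounded.
\item $\hat\rho^2\tilde hz\le\rho_1^2|\tilde h|^2+\rho_0^2|z|^2$, because $\hat\rho^2=\rho_0\rho_1$.
\item $\hat\rho^2Gz\le\rho_2^2|G|^2+\hat\rho^4\rho_2^{-2}|z|^2$, and $\hat\rho^4\rho_2^{-2}\le C\rho_0^2$.
\item The weight-derivative term is bounded by $C\rho_0^2|z|^2$.
\end{itemize}
Integrating in $t$ and using the estimate of Theorem \ref{prop:linear_control} together with $\hat\rho(0)$ bounded yields \eqref{des Proposition 5}.

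\textbf{Step 2 (estimate \eqref{des Proposition 6}).} Multiply the equation by $\rho_1^2 z_t$ and, separately, by $-\rho_1^2(a z_x)_x$; formally this is justified by the $H^2_a$ regularity when $z_0\in H^1_a$. Since $b$ depends only on $t$, integrating by parts in $x$ gives
\[
\int_0^1 -b(az_x)_x z_t=\frac b2\frac{d}{dt}\int_0^1 a|z_x|^2 .
\]
The boundary terms vanish: $z_t=0$ at $x=1$, and at $x=0$ one has $az_x\in H^1$ with $z_t(0)=0$, as in the degenerate framework of \cite{Alabau_cannarsa_fragnelli-06}. This produces
\[
\frac12\frac{d}{dt}\Big(\rho_1^2 b\int_0^1 a|z_x|^2\Big)+\rho_1^2\int_0^1|z_t|^2
\le \frac12\big|(\rho_1^2b)_t\big|\int_0^1 a|z_x|^2+\rho_1^2\int_0^1\big|B\sqrt a z_x+D_3Fz-\tilde h\cara_{\omega_1}-G\big|\,|z_t| .
\]
We handle each piece as follows.
\begin{itemize}
\item Young's inequality absorbs $\frac12\rho_1^2|z_t|^2$.
\item The terms $\rho_1^2 a|z_x|^2$ and $\rho_1^2|z|^2$ are bounded by the $\hat\rho^2$ quantities from Step 1.
\item For the control, $\rho_1^2|\tilde h|^2$ is exactly what \eqref{estimate for solution} provides.
\item For the source, $\rho_1^2|G|^2\le C\rho_2^2|G|^2$.
\end{itemize}
For $(\rho_1^2b)_t$ we use $|b'|\le C_bb$ from \textbf{H3} and $|(\rho_1^2)_t|\le C\hat\rho^2$, which reduces this term to $\int\hat\rho^2a|z_x|^2$, already controlled by Step 1.

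After integrating in $t$, we get the sup-bound for $\rho_1^2\|\sqrt a z_x\|^2$ and the bound for $\int\rho_1^2|z_t|^2$, where the initial term is $\rho_1(0)^2\|\sqrt a z_{0,x}\|^2\le C\|z_0\|_{H^1_a}^2$. Finally, reading the equation as
\[
b(az_x)_x=z_t-B\sqrt a z_x+D_3Fz-\tilde h\cara_{\omega_1}-G
\]
and using $b\ge m$ gives the bound for $\int\rho_1^2|(az_x)_x|^2$. This proves \eqref{des Proposition 6}.

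The main obstacle is the bookkeeping of the weights. Each time derivative of a weight costs a factor $s\tau'\sim\tau^{5/4}$, so the chain $\rho_1\to\hat\rho\to\rho_0$, with ratios $(\zeta^*)^{3/4}$ at each step, must absorb it. That is why the exponents $-3/2$ and $-3/4$ were chosen. We also have to check that the $G$ term is compatible with $\rho_2$ near $t=T$, using \eqref{eq:comp_pesos}, and that the degenerate boundary terms at $x=0$ vanish.
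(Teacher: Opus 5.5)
Your proof is correct and is essentially the paper's argument. You use the same multipliers $\hat\rho^2 z$ and $\rho_1^2 z_t$, with the weight bounds $|(\hat\rho^2)_t|\le C\rho_0^2$ and $|(\rho_1^2)_t|\le C\hat\rho^2$, which you rightly derive from $|\tau'|\le C\tau^{5/4}$. The only variation is that you recover $\int_Q\rho_1^2|(az_x)_x|^2$ directly from the equation rather than through the paper's third multiplier $-\rho_1^2(az_x)_x$, which is a harmless simplification.

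Delete the aside claiming $\hat\rho\rho_2$ is bounded: it is false, since this product blows up as $t\to T$. You never use it, though. What Step 1 actually needs is $\hat\rho^2=\rho_0\rho_1\le C\rho_0\rho_2$, and that does hold.
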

%\color{red}
\begin{proof}
We proceed following the steps of \cite{DemarqueLimacoViana_deg_eq2018}.
We consider more generally the equation  
\begin{equation}
    \label{eq:linearized_system new}
    \left\{\begin{aligned}
        &y_t - b(t)\left(a(x) y_x\right)_x  
        + c_1(x,t)y + d_1(x,t)\sqrt{a}y_x = \tilde h 1_{\omega_1} + G & \text{in } Q, \\
        &y(0,t)=y(1,t)=0 & \text{on } (0,T), \\
        &y(\cdot,0) = y^0 & \text{in } \Omega,
        \end{aligned}
    \right.
\end{equation}
where the coefficients $b(t)$, %$=\frac{1}{\ell(t)^2}$, 
$c_i$, $d_i$, $i=1,2$ are bounded and $b(t)$ is bounded away from zero.

Let us multiply $\eqref{eq:linearized_system new}$ by $\hat{\rho}^{2} y$ and integrate over $[0,1]$. Hence, using that $\hat{\rho}^{2} = \rho_{0}\rho_{1}$, and $\rho_{1}\leq C\rho_{2}$, we compute
\begin{equation}\label{second estimate}
    \begin{array}{l}
        \dfrac{1}{2}\dfrac{d}{dt}\displaystyle\int_{0}^{1}\hat{\rho}^{2}|y|^{2}dx + b(t) \displaystyle\int_{0}^{1}\hat{\rho}^{2}a(x)|y_{x}|^{2} dx 
        + \displaystyle\int_0^1 \hat{\rho} d_1(x,t) \sqrt{a(x)}y_x y dx   \\
        \leq C\left(\displaystyle\int_{0}^{1}\rho_{0}^{2}|y|^{2}dx + \displaystyle\int_{\omega_1}\rho_{1}^{2}|\tilde h|^{2}dx + \displaystyle\int_{0}^{1}\rho_{2}^{2}|G|^{2}dx\right) + \mathcal{M},
    \end{array}
\end{equation}
where $\mathcal{M} = \displaystyle\int_{0}^{1}\hat{\rho}(\hat{\rho})_{t}|y|^{2}dx$ 
and $(\cdot)_{t}=\frac{d}{dt}(\cdot)$.
Recall that
$A^*(t) = C_1 \tau(t)$, and $\zeta^*(t) = C_2 \tau(t)$, then we have that $(A^*)_{t}=\bar C (\zeta^*)_{t}$ and consequently
\begin{equation*}
	\begin{split}
		\hat{\rho}(\hat{\rho})_{t} &= e^{-sA^*}(\zeta^*)^{-3/4}\left(-s(A^*)_{t} e^{-sA^*} (\zeta^*)^{-3/4} - \frac{3}{4} e^{-sA^*}(\zeta^*)^{-7/4}(\zeta^*)_{t}\right)    \\
		&= -e^{-2sA^*}(\zeta^*)_{t}\left(s(\zeta^*)^{-3/2}\bar{C} + \frac{3}{4}(\zeta^*)^{-5/2} \right)  \\
		&=-\rho_{0}^{2}(\zeta^*)_{t}\left(s(\zeta^*)^{-3/2}\bar{C} + \frac{3}{4}(\zeta^*)^{-5/2} \right).
	\end{split}
\end{equation*}

Thus, since $\tau_t \leq C_3 \tau^{5/4}$ and $\tau^{-1}$ is bounded, then, for any $t\in [0,T)$,
\begin{equation*}
    \begin{array}{l}
        |\hat{\rho}(\hat{\rho})_{t}|\leq C_4\rho_{0}^{2}\tau^{5/4}|s \tau ^{-3/2}\bar{C} + \frac{3}{4} \tau^{-5/2}|  
         \leq C_4\rho_{0}^{2}|s\bar{C} \tau^{-1/4} + \frac{3}{4} \tau^{-5/4}| \leq C_5\rho_{0}^{2},
    \end{array}
\end{equation*}
and we get
\begin{equation*}
%\label{estimate of M}
    \mathcal{M}\leq C_5 \displaystyle\int_{0}^{1}{\rho_{0}^{2}}|y|^{2} dx.
\end{equation*}

Thus, using Young's inequality, \eqref{second estimate} becomes, for a small $\epsilon>0$, 
\begin{equation*}
    \begin{array}{l}
        \dfrac{1}{2}\dfrac{d}{dt}\displaystyle\int_{0}^{1}\hat{\rho}^{2}(|y|^{2})dx + b(t) \displaystyle\int_{0}^{1}\hat{\rho}^{2} a(x)(|y_{x}|^{2})dx \\%\vspace{0.1cm}\\
        \leq C\left(\displaystyle\int_{0}^{1}\rho_{0}^{2}(|y|^{2} )dx + \displaystyle\int_{\omega_1}\rho_{1}^{2}|\tilde h|^{2}dx + \displaystyle\int_{0}^{1}\rho_{2}^{2}|G|^{2} dx\right)  \\
        \quad + \epsilon \displaystyle\int_{0}^{1}\hat{\rho}^{2} a(x) |y_{x}|^{2} dx + C_\epsilon \left( \int_0^1 |y|^2 dx \right).
    \end{array}
\end{equation*}

Since $b(t)$ is bounded and $\rho_0$ is bounded by below, there is a constant $D>0$ such that
\begin{equation*}
    \begin{array}{l}
       \dfrac{1}{2}\dfrac{d}{dt}\displaystyle\int_{0}^{1}\hat{\rho}^{2}(|y|^{2})dx + \displaystyle\int_{0}^{1}\hat{\rho}^{2} a(x)(|y_{x}|^{2})dx\vspace{0.1cm}\\
        \leq D\left(\displaystyle\int_{0}^{1}\rho_{0}^{2}(|y|^{2} )dx + \displaystyle\int_{\omega_1}\rho_{1}^{2}|\tilde h|^{2}dx + \displaystyle\int_{0}^{1}\rho_{2}^{2}|G|^{2}dx\right)
    \end{array}
\end{equation*}
and, integrating in time, we conclude \eqref{des Proposition 5}.

%--------------------------------------

Now, to prove \eqref{des Proposition 6}, we multiply $\eqref{eq:linearized_system new}$ by $\rho_{1}^{2} y_{t}$  and integrate over $[0,1]$. We get
\begin{equation*}%\label{third estimate}
    \begin{array}{l}
        \displaystyle\int_{0}^{1}\rho_{1}^{2}(|y_{t}|^{2})dx + \dfrac{1}{2} b(t)
        \displaystyle\int_{0}^{1}\rho_{1}^{2} a(x) \dfrac{d}{dt}(|y_{x}|^{2})dx % \vspace{0.1cm}\\
        + \displaystyle\int_0^1 \hat{\rho} c_1(x,t) y y_t dx 
        + \int_0^1 \hat{\rho} d_1(x,t) \sqrt{a}y_x y_t dx \\
        \quad \leq C\left(\displaystyle\int_{0}^{1}\rho_{0}^{2}(|y|^{2} )dx + \displaystyle\int_{\omega_1}\rho_{1}^{2}|\tilde h|^{2}dx + \displaystyle\int_{0}^{1}\rho_{2}^{2}|G|^{2} dx\right) %\vspace{0.1cm}\\
        + \ \dfrac{1}{4}\displaystyle\int_{0}^{1}\rho_{1}^{2} |y_{t}|^{2} dx.
    \end{array}
\end{equation*}

Thus, using that $\hat\rho = \rho_0 \rho_1$, Young's inequality and the boundedness of $c_i$ and $d_i$, we get
\begin{equation}\label{third estimate}
    \begin{array}{l}
        \displaystyle\int_{0}^{1}\rho_{1}^{2}(|y_{t}|^{2})dx + \dfrac{1}{2} b(t) \dfrac{d}{dt}\displaystyle\int_{0}^{1}\rho_{1}^{2} a(x)(|y_{x}|^{2})dx  \vspace{0.1cm}\\
         \leq D\left(\displaystyle\int_{0}^{1}\rho_{0}^{2}(|y|^{2} )dx + \displaystyle\int_{\omega_1}\rho_{1}^{2}|\tilde h|^{2}dx + \displaystyle\int_{0}^{1}\rho_{2}^{2} |G|^{2} dx\right) %\vspace{0.1cm}\\
        + \ \dfrac{1}{2}\displaystyle\int_{0}^{1}\rho_{1}^{2} |y_{t}|^{2} dx + |\widetilde{\mathcal{M}}|,
    \end{array}
\end{equation}
where $\widetilde{\mathcal{M}}= \dfrac{1}{2}\displaystyle\int_{0}^{1}(\rho_{1}^{2})_{t}\, a(x) |y_{x}|^{2} dx$.
Since $|\zeta_{t}|\leq C\zeta^{2}$,  
we have that $|(\rho^{2}_{1})_{t}|\leq C\hat{\rho}^{2}$. 
Hence,
$$
    |\widetilde{\mathcal{M}}| \leq C\displaystyle\int_{0}^{1}\hat{\rho}^{2}\, a(x) |y_{x}|^{2} dx.
$$ 
 
Thus, \eqref{third estimate} gives 
\begin{equation*}
    \begin{array}{l}
        \dfrac{1}{2}\displaystyle\int_{0}^{1}\rho_{1}^{2}(|y_{t}|^{2})dx + \dfrac{1}{2} b(t)  \dfrac{d}{dt}\displaystyle\int_{0}^{1}\rho_{1}^{2} a(x) |y_{x}|^{2} dx \\ % \vspace{0.1cm}\\
        \leq D \left(\displaystyle\int_{0}^{1}\rho_{0}^{2}(|y|^{2} )dx + \displaystyle\int_{\omega_1}\rho_{1}^{2}|\tilde h|^{2}dx + \displaystyle\int_{0}^{1}\rho_{2}^{2} |G|^{2} dx\right) %\vspace{0.1cm}\\
        +  C\displaystyle\int_{0}^{1}\hat{\rho}^{2}\, a(x) |y_{x}|^{2} dx.
    \end{array}
\end{equation*}

Integrating the previous inequality from $0$ to $t$ and using the boundedness of $b(t)$ and the first estimate, \eqref{des Proposition 5}, we get
\begin{equation}\label{primeira da des proposição 6}
    \begin{array}{c}
        \displaystyle\sup_{t \in [0,T]}(\rho_{1}^{2}\|\sqrt{a}y_{x} \|^{2}_{L^{2}(0,1)}) + \displaystyle\int_{Q}\rho_{1}^{2} |y_{t}|^{2} dxdt
        \leq C \kappa_{1}(G,y_{0}).
     \end{array}
\end{equation}

%--------------------------------------

Finally, to conclude \eqref{des Proposition 6}, it remains to estimate $\int_{Q}\rho_{1}^{2} |(a(x)y_{x})_{x}|^{2} dxdt$. To do this, it is enough to multiply $\eqref{eq:linearized_system new}$ by $-\rho_{1}^{2}(a(x)y_{x})_{x}$ and integrate over $[0,1]$ as before. We get
\begin{equation*}
    \begin{split}
        \frac{1}{2}\int_0^1 \rho_1^2 a(x) \frac{d}{dt} (|y_x|^2 ) dx + b(t) \int_0^1 \rho_1^2 \left( |(a y_x)_x|^2  \right) 
        - \int_0^1 \rho_1^2 \left( (a y_x)_x c_1 y  \right) - \int_0^1 \rho_1^2 \left( (a y_x)_x d_1 \sqrt{a} y_x  \right) \\
        \leq C \left( \int_0^1 \rho_1^2 \left( (|\tilde h| + |H|)|(a y_x)_x| \right) \right).
    \end{split}    
\end{equation*}

Thus, for a small $\epsilon>0$, 
\begin{equation*}
    \begin{array}{l}
         \dfrac{1}{2}\dfrac{d}{dt}\displaystyle\int_{0}^{1}\rho_{1}^{2} \,a(x) |y_{x}|^{2} dx + b(t) \displaystyle\int_{0}^{1}\rho_{1}^{2} |(a(x)y_{x})_{x}|^{2} dx %\vspace{0.1cm}
         \\
         \leq C_\epsilon\left(\displaystyle\int_{0}^{1}\rho_{1}^{2} |y|^{2} dx + \displaystyle\int_{\omega_1}\rho_{1}^{2}|\tilde h|^{2}dx + \displaystyle\int_{0}^{1}\rho_{1}^{2} |G|^{2} dx\right) %\vspace{0.1cm}
         \\
         \qquad+ \displaystyle \epsilon \int_{0}^{1}\rho_{1}^{2} |(a y_x)_x|^{2} dx
         %+ \displaystyle \epsilon \int_{0}^{1}\rho_{1}^{2}\left(|(a y_x)_x|^{2} \right)dx 
         + C_\epsilon \int_{0}^{1}\rho_{1}^{2} |c_1 y|^2 dx + |\mathcal{N}|,
    \end{array}
\end{equation*}
where $|\mathcal{N}| = \dfrac{1}{2}\displaystyle\int_{0}^{1}(\rho_{1}^{2})_{t}\, a(x) |y_{x}|^{2} dx$.
Since $|\zeta_{t}|\leq C\zeta^{2}$,  
we have that $|(\rho^{2}_{1})_{t}|\leq C\hat{\rho}^{2}$. 
Hence,
$$
    |\mathcal{N}| \leq C\displaystyle\int_{0}^{1}\hat{\rho}^{2}\, a(x) |y_{x}|^{2} dx.
$$ 

Thus, since $b(t)$,  $c_i$ and $d_i$ are bounded, and $\rho_1 \leq C\rho_0 \leq C\rho_2$, we get, for some $D>0$,
\begin{equation*}
    \begin{array}{l}
         \dfrac{1}{2}\dfrac{d}{dt}\displaystyle\int_{0}^{1}\rho_{1}^{2} \,a(x)(|y_{x}|^{2})dx + \dfrac{1}{2}\displaystyle\int_{0}^{1}\rho_{1}^{2}(|(a(x)y_{x})_{x}|^{2} )dx\vspace{0.1cm}\\
         \leq D \left(\displaystyle\int_{0}^{1}\rho_{0}^{2} |y|^{2} dx + \displaystyle\int_{\omega_1}\rho_{1}^{2}|\tilde h|^{2}dx + \displaystyle\int_{0}^{1}\rho_{2}^{2} |G|^{2} dx\right) %\vspace{0.1cm}\\
         +\, D\displaystyle\int_{0}^{1}\hat{\rho}^{2}\, a(x) |y_{x}|^{2} dx. 
    \end{array}
\end{equation*}

Integrating over time and using estimates (\ref{des Proposition 5}) and (\ref{primeira da des proposição 6})  we obtain
\begin{equation}\label{segunda da des proposição 6}
    \begin{array}{l}
        \displaystyle\int_{Q}\rho_{1}^{2} |(a(x)y_{x})_{x}|^{2} dxdt  \leq C \kappa_{1}(G,y_{0}).
    \end{array}
\end{equation}

From (\ref{primeira da des proposição 6}) and (\ref{segunda da des proposição 6}) we infer \eqref{des Proposition 6}.
\end{proof}

\section{Local null controllability of the nonlinear system}
\label{sec:control for nonlinear system}

%All along this section we use the weights defined in \eqref{eq:weights_rhos} and 
We use Liusternik's inverse function theorem  to obtain our local controllability results for the nonlinear system. Here $B_{r}(\zeta)$ denote an open ball of radius $r$ and centered at $\zeta$.

\begin{teo}[Liusternik \cite{Alekseev}]\label{Liusternik} 
    Let $ \mathcal{Y}$ and $ \mathcal{Z}$ be Banach spaces and let $\mathcal{A}:B_{r}(0)\subset  \mathcal{Y}\rightarrow  \mathcal{Z}$ be a $\mathcal{C}^{1}$ mapping. Let as assume that $\mathcal{A}^{\prime}(0)$ is onto and let us set $\mathcal{A}(0)=\zeta_{0}$. Then, there exist $\delta >0$, a mapping $W: B_{\delta}(\zeta_{0})\subset  \mathcal{Z}\rightarrow  \mathcal{Y}$ and a constant $K>0$ such that
    \begin{equation*}
        W(z)\in B_{r}(0),\,\, \mathcal{A}(W(z))=z\,\, \text{and}\,\, \Vert W(z)\Vert_{ \mathcal{Y}}\leq K\Vert z-\zeta_{0}\Vert_{ \mathcal{Z}}\, \, \forall\, z\in B_{\delta}(\zeta_{0}).
    \end{equation*}
    In particular, $W$ is a local inverse-to-the-right of $\mathcal{A}$.
\end{teo}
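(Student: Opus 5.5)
The plan is the classical Graves–Lyusternik iteration: a Newton-type scheme in which the derivative is frozen at $0$, and the surjective operator $\Lambda:=\mathcal{A}'(0)$ is inverted only through a bounded, possibly nonlinear, right selection.

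\textbf{Step 1 (quantitative surjectivity).} Since $\Lambda\in\mathcal{L}(\mathcal{Y},\mathcal{Z})$ is onto and both spaces are Banach, the open mapping theorem gives $c>0$ such that $\Lambda(B_1(0))\supset B_{1/c}(0)$. By homogeneity, for every $w\in\mathcal{Z}$ we can choose some $d=:S(w)\in\mathcal{Y}$ with
$$\Lambda d=w \quad\text{and}\quad \Vert d\Vert_{\mathcal{Y}}\le c\Vert w\Vert_{\mathcal{Z}}.$$
The selection $S$ is fixed once and for all by the axiom of choice; we do not need it to be linear or continuous.

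\textbf{Step 2 (local almost-linearity).} Since $\mathcal{A}$ is $\mathcal{C}^1$, we pick $0<\rho<r$ with $\Vert \mathcal{A}'(y)-\Lambda\Vert\le \frac{1}{2c}$ for all $y\in B_\rho(0)$. The mean value inequality applied to $y\mapsto \mathcal{A}(y)-\Lambda y$ on the convex ball then gives, for $y_1,y_2\in B_\rho(0)$,
$$\Vert \mathcal{A}(y_1)-\mathcal{A}(y_2)-\Lambda(y_1-y_2)\Vert_{\mathcal{Z}}\le \tfrac{1}{2c}\Vert y_1-y_2\Vert_{\mathcal{Y}}.$$

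\textbf{Step 3 (iteration).} Set $\delta:=\rho/(4c)$ and $K:=2c$. Given $z\in B_\delta(\zeta_0)$, define $y_0=0$ and $y_{n+1}=y_n+d_n$, where $d_n:=S(z-\mathcal{A}(y_n))$. Then $\Vert d_0\Vert\le c\Vert z-\zeta_0\Vert$. As long as $y_n,y_{n+1}\in B_\rho(0)$, we have $\Lambda d_n=z-\mathcal{A}(y_n)$, so
$$z-\mathcal{A}(y_{n+1})=-\big[\mathcal{A}(y_{n+1})-\mathcal{A}(y_n)-\Lambda d_n\big].$$
Step 2 gives $\Vert z-\mathcal{A}(y_{n+1})\Vert\le \frac{1}{2c}\Vert d_n\Vert$, hence $\Vert d_{n+1}\Vert\le \frac12\Vert d_n\Vert$. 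By induction, every partial sum satisfies
$$\Vert y_{n}\Vert\le \sum_k 2^{-k}\Vert d_0\Vert\le 2c\Vert z-\zeta_0\Vert<2c\delta=\rho/2,$$
so all iterates stay in $B_\rho(0)$ and the induction closes. Thus $(y_n)$ is Cauchy; its limit $y$ satisfies $\Vert y\Vert\le K\Vert z-\zeta_0\Vert$ and $y\in B_\rho(0)\subset B_r(0)$. Moreover $\Vert z-\mathcal{A}(y_n)\Vert\le \frac1{2c}\Vert d_{n-1}\Vert\to 0$, so continuity of $\mathcal{A}$ yields $\mathcal{A}(y)=z$. We then define $W(z):=y$; this is well defined because $S$ was fixed, and it has all the stated properties.

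\textbf{Main obstacle.} The delicate point is the bookkeeping in Step 3. The contraction estimate of Step 2 is only valid inside $B_\rho(0)$, so one must verify inside the same induction that each new iterate remains in the ball. This is exactly what the choice $\delta=\rho/(4c)$, with strict inequality, guarantees. A secondary subtlety is that no continuous linear right inverse of $\Lambda$ need exist in general Banach spaces. That is why Step 1 uses only the open-mapping bound with a nonlinear selection, and why $W$ is asserted to exist only as a right inverse, with no claim of continuity.
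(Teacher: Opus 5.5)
The paper does not prove this statement; it quotes it from \cite{Alekseev} and uses it as a black box, with $\zeta_0=\mathcal{A}(0,0)=0$. Your argument is correct. It is the classical Graves--Lyusternik scheme: a Newton iteration with the derivative frozen at $0$, inverting $\mathcal{A}'(0)$ through a bounded, possibly nonlinear, right selection. This is the standard proof behind the cited result. Your Step 3 induction closes as you claim, the limit satisfies $\|y\|\le 2c\|z-\zeta_0\|<\rho/2<r$, and continuity of $\mathcal{A}$ gives $\mathcal{A}(y)=z$.

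One small correction is needed in Step 1. From $\Lambda(B_1(0))\supset B_{1/c}(0)$, homogeneity only gives, for each $w$, preimages $d$ with $\|d\|<c'\|w\|$ for every $c'>c$. It does not give $\|d\|\le c\|w\|$, because the infimum of the norm over the coset $\Lambda^{-1}(w)$ need not be attained in a non-reflexive space. For example, $\Lambda x=\sum_n(1-1/n)x_n$ on $\ell^1$ satisfies $\Lambda(B_1(0))=(-1,1)$, yet $w=1$ has no preimage of norm $\le 1$. Replace $c$ by $2c$ in the selection bound and rescale $\rho$, $\delta$, $K$ accordingly; nothing else in your argument changes.
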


We define a map $\mathcal{A} : \mathcal{Y} \to \mathcal{Z}$ between suitable Banach spaces $\mathcal{Y}$ and $\mathcal{Z}$ whose definition involve the estimates of the controllability of the linearized system in Proposition \ref{prop:linear_control}.

From the linearized equation \eqref{eq:linearized_system1}, we denote
\begin{equation*}
    \begin{split}
        G=&z_t-b(t)\left({a}(x)z_x\right)_x-B(x,t)\sqrt{a}z_x + D_3F\left(\ell(t)x,t,\widetilde{y}\right)z - h \widetilde y \cara_{_{{\omega}_1}}.     
    \end{split}
\end{equation*}

Let us define the space
\begin{equation}\label{eq:espaceY}
    \begin{array}{c}
        \mathcal{Y} = \{  (z, h)\in [L^{2}(\Omega\times (0,T))]\times L^{2}( \omega_1 \times (0,T)) \ : \ z(\cdot,t) \ \text{is abs. continuous in}\ \Omega,\ \text{a.e. in}\ [0, T], \\
        \ \rho_1 h \widetilde y \in L^{2}( \omega_1 \times (0,T)), \
        \rho_{0} z, \ \rho_2 G \in L^2(Q), \\
        z(1, t) \equiv z(0,t) \equiv 0 \ \text{a.e in}\ [0, T], \ z(\cdot,0) \in H_a^1(\Omega) \}.
    \end{array}
\end{equation}

Thus, $\mathcal{Y}$ is a Hilbert space with the norm
\begin{equation*}
    \begin{array}{lll}
          \Vert (z, h)\Vert^{2}_{\mathcal{Y}} &=& \Vert \rho_{0} z\Vert^{2}_{L^{2}(Q)} +  \Vert\rho_1 h \widetilde y\Vert^{2}_{L^{2}(\omega_1 \times (0,T))} + \Vert\rho_{2} G\Vert^{2}_{L^{2}(Q)} + \Vert z(\cdot,0)\Vert^{2}_{H^{1}_{a}(\Omega)}.
    \end{array}
\end{equation*}

Due to Proposition  \ref{addicional_estimates_case_linear}, for any $(z, h)\in \mathcal{Y}$ we have:
\begin{equation}\label{eq:estimativas_total}
    \begin{array}{c}
        \displaystyle\sup_{[0,T]}(\hat{\rho}^{2}\|z\|^{2}_{L^{2}(0,1)}) 
        +\displaystyle\int_{Q}\hat{\rho}^{2} a(x) |z_{x}|^{2} dxdt
        + \displaystyle\sup_{[0,T]}(\rho_{1}^{2}\|\sqrt{a}z_{x} \|^{2}_{L^{2}(0,1)}) \\
        + \displaystyle\int_{Q}\rho_{1}^{2} (|z_{t}|^{2} + |(a(x)z_{x})_{x}|^{2}) dxdt
        \leq C \Vert(z, h)\Vert^{2}_{\mathcal{Y}}. 
    \end{array}
\end{equation}

Now, let us introduce the Banach space $\mathcal{Z} = \mathcal{F} \times H_a^1(\Omega)$ such that 
$$\mathcal{F}=\{ z \in L^2(Q) \ : \ \rho_2 z \in L^2(Q) \}.$$

Finally, consider the mapping $\mathcal{A} : \mathcal{Y} \to \mathcal{Z}$ such that
$(z, h) \mapsto (\mathcal{A}_1, \mathcal{A}_2)$ where the components $\mathcal{A}_i$, $i=1,2$, are given by
\begin{equation}\label{aplicação A}
    \left\{\begin{array}{ll}
        \mathcal{A}_1(z, h) =& z_t - b(t) \left( a(x) z_x \right)_x - B(t) c(x)\sqrt{a}z_x + F\left(\ell(t)x,t,z+\widetilde{y}\right) -F\left(\ell(t)x,t,\widetilde{y}\right) - {h}\cara_{_{{\omega_1}}}(z+\widetilde{y}) ,\\ %\vspace{0.1cm}\\
        \mathcal{A}_2(z, h) =& z(\cdot,0).
    \end{array}\right.
\end{equation}

We prove that we can apply Theorem \ref{Liusternik} to the mapping $\mathcal{A}$ through the following three lemmas:
\begin{lema}\label{A bem definido}
    Let $\mathcal{A}: \mathcal{Y}\rightarrow  \mathcal{Z}$ be given by \eqref{aplicação A}. Then, $\mathcal{A}$ is well defined and continuous. 
\end{lema}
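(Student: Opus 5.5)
To show that $\mathcal{A}$ is well defined, we must check that $\mathcal{A}_1(z,h)\in\mathcal{F}$, i.e. $\rho_2\mathcal{A}_1(z,h)\in L^2(Q)$, and that $\mathcal{A}_2(z,h)\in H^1_a(\Omega)$. The second is immediate from the definition of $\mathcal{Y}$, since $\|z(\cdot,0)\|_{H^1_a}\le\|(z,h)\|_{\mathcal{Y}}$.

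For the first, we split $\mathcal{A}_1$ as
$$
\mathcal{A}_1(z,h)=G+\Big[F(\ell(t)x,t,z+\widetilde y)-F(\ell(t)x,t,\widetilde y)-D_3F(\ell(t)x,t,\widetilde y)z\Big]-h\cara_{\omega_1}z .
$$
Here $G$ is the linear part appearing in the definition of $\mathcal{Y}$, and $\rho_2G\in L^2(Q)$ with norm at most $\|(z,h)\|_{\mathcal{Y}}$. By \textbf{H2}, the bracket is bounded pointwise by $C|z|^2$. It therefore suffices to bound $\|\rho_2 z^2\|_{L^2(Q)}$ and $\|\rho_2 h z\|_{L^2(\omega_1\times(0,T))}$ by $C\|(z,h)\|_{\mathcal{Y}}^2$.

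\textbf{Quadratic term.} We use $\rho_2\le C\rho_1^2\le C\hat\rho\,\rho_0$ from \eqref{eq:compara_rhos}. This gives
$$
\int_Q\rho_2^2|z|^4\le C\int_0^T\big(\rho_1^2\|z\|_{L^\infty}^2\big)\big(\rho_1^2\|z\|_{L^2}^2\big)\,dt .
$$
To control $\|z(t)\|_{L^\infty(0,1)}$ we use the weakly degenerate setting ($K<1$). For $u\in H^1_a(0,1)$ with $u(0)=0$, Cauchy--Schwarz gives $|u(x)|\le (\int_0^1 a^{-1})^{1/2}\|\sqrt a u_x\|_{L^2}$, and $1/a$ is integrable because $a(x)\ge a(1)x^K$ by \textbf{H1}. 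Hence $\rho_1^2\|z\|_{L^\infty}^2\le C\sup_t\rho_1^2\|\sqrt a z_x\|^2$, and $\rho_1^2\|z\|_{L^2}^2\le C\rho_0^2\|z\|_{L^2}^2$ is integrable in time. By \eqref{eq:estimativas_total} the product is bounded by $C\|(z,h)\|_{\mathcal{Y}}^4$.

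\textbf{Bilinear term.} Since $|\widetilde y|\ge C>0$, we have $|h|\le C|h\widetilde y|$, and
$$
\int_{\omega_1\times(0,T)}\rho_2^2 h^2z^2\le C\int_0^T\|\rho_1 h\widetilde y\|_{L^2(\omega_1)}^2\,\big(\rho_2^2\rho_1^{-2}\|z\|_{L^\infty}^2\big)\,dt .
$$
Using $\rho_2\le C\rho_1^2$, we get $\rho_2^2\rho_1^{-2}\|z\|_\infty^2\le C\rho_1^2\|z\|_\infty^2\le C\sup_t\rho_1^2\|\sqrt a z_x\|^2$. The whole term is then bounded by $C\|(z,h)\|_{\mathcal{Y}}^4$.

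This weight bookkeeping is the main obstacle. One must verify that the inequalities in \eqref{eq:compara_rhos} give exactly the powers required, including $\rho_2\le C\rho_1^2$, which needs the comparison \eqref{eq:comp_pesos}. One must also ensure the $L^\infty$ embedding for $H^1_a$ is valid, which requires $K<1$.

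\textbf{Continuity.} We show $\mathcal{A}$ is continuous by estimating differences with the same splitting. For the nonlinear part, write
$$
F(z_1+\widetilde y)-F(z_2+\widetilde y)-D_3F(\widetilde y)(z_1-z_2)=\int_0^1\big[D_3F(\widetilde y+z_2+\theta(z_1-z_2))-D_3F(\widetilde y)\big]\,d\theta\,(z_1-z_2).
$$
By \textbf{H2}, this is bounded by $C(|z_1|+|z_2|)|z_1-z_2|$, since the quadratic remainder bound forces $D_3F$ to be Lipschitz in $r$. For the bilinear part,
$$
h_1z_1-h_2z_2=(h_1-h_2)z_1+h_2(z_1-z_2).
$$
The estimates above, applied bilinearly, then give
$$
\|\mathcal{A}(z_1,h_1)-\mathcal{A}(z_2,h_2)\|_{\mathcal{Z}}\le C\big(1+\|(z_1,h_1)\|_{\mathcal{Y}}+\|(z_2,h_2)\|_{\mathcal{Y}}\big)\|(z_1-z_2,h_1-h_2)\|_{\mathcal{Y}},
$$
so $\mathcal{A}$ is locally Lipschitz, and in particular continuous.
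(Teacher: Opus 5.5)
Your proof is correct. For the quadratic remainder it is the paper's argument: both use $\rho_2^2\le C\rho_1^4\le C\rho_1^2\rho_0^2$, the embedding $H^1_a(0,1)\subset L^\infty(0,1)$, and the bound on $\sup_t\rho_1^2\|\sqrt a z_x\|^2$ from \eqref{eq:estimativas_total}. You apply \textbf{H2} directly rather than the mean value theorem, and your derivation of the embedding from $a(x)\ge a(1)x^K$ with $K<1$ supplies a step the paper only asserts.

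The genuine difference is the bilinear term $\int\rho_2^2|hz|^2$. The paper splits it as $\|h\|_{L^4}\|z\|_{L^4}$ and controls $h$ through the regularity $\overline\rho h\in\mathsf U$ of \eqref{cr2}. That estimate, however, was proved only for the particular control produced by the Lax--Milgram construction in Theorem~\ref{prop:linear_control}, and the $\mathsf U$-norm is not part of $\|\cdot\|_{\mathcal Y}$. So it does not bound this term for an arbitrary element of $\mathcal Y$. You instead put $z$ in $L^\infty$ and $h$ in $L^2(\omega_1)$, using the $\mathcal Y$-component $\rho_1 h\widetilde y$, the lower bound on $|\widetilde y|$, and $\rho_2^2\rho_1^{-2}\le C\rho_1^2$. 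This gives a bound $C\|(z,h)\|_{\mathcal Y}^4$ entirely within $\mathcal Y$, which is what is needed for $\mathcal A$ to map all of $\mathcal Y$ into $\mathcal Z$. The price is dependence on the positivity of the trajectory, but only on $\omega_1\times(0,T)$. That is just as well: a lower bound on all of $Q$ would contradict $\widetilde y=0$ on $\Sigma$, and the paper needs the same interior bound anyway in Lemma~\ref{Mapa sobrejetivo} to set $h=\tilde h/\widetilde y$.

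You also prove continuity, through the local Lipschitz bound; the paper's proof of this lemma does not address it, and it only follows later from Lemma~\ref{DA continuo}. Your observation that symmetrizing \textbf{H2} in $(r_1,r_2)$ gives $|D_3F(r_1)-D_3F(r_2)|\le 2C|r_1-r_2|$ is what makes this work. It also justifies the Lipschitz bound on $D_3F$ that the paper uses without comment.

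One small correction: $\rho_2\le C\rho_1^2$ does not need \eqref{eq:comp_pesos}. Since $\rho_2\rho_1^{-2}=e^{sA^*/2}(\zeta^*)^3\hat\zeta^{-1}$, with $A^*$ a negative multiple of $\tau$ and $\zeta^*/\hat\zeta$ constant, only $A^*<0$ is used. The comparison \eqref{eq:comp_pesos} is what is needed to pass from Corollary~\ref{cor:carleman_pesos_t} to \eqref{eq:carleman_simples}.
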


\begin{proof}
	We want to show that $\mathcal{A}(z, h)$ belongs to $ \mathcal{Z}$, for every $(z, h) \in \mathcal{Y}$.
    
    Clearly $\| \mathcal{A}_{2}(z, h)\|_{\mathcal{F}}^2 < \infty$. Now, we show that $\mathcal{A}_{1}(z, h)$, belongs to its respective space. Indeed, 
	\begin{equation*}
		\begin{array}{lll}
			\|\mathcal{A}_{1}(z, h)\|^{2}_{\mathcal{F}}&\leq & 2\displaystyle\int_{Q}\rho^{2}_{2}|G|^{2}dxdt \\
            &&+ 2\displaystyle\int_{Q}\rho^{2}_{2}\left|F\left(\ell(t)x,t,z+\widetilde{y}\right)-F\left(\ell(t)x,t,\widetilde{y}\right) -D_3F\left(\ell(t)x,t,\widetilde{y}\right)z \right|^{2}dxdt \\
            && + 2\displaystyle\int_{Q}\rho^{2}_{2}\left| h\cara_{_{{\omega_1}}}z \right|^{2}dxdt \\
			&=:& 2 I_{1} + 2 I_{2} + 2 I_3.
		\end{array}
	\end{equation*}

    It is immediate by definition of the space $\mathcal{Y}$ that $I_{1}\leq C \|(z, h)\|^{2}_{\mathcal{Y}}$. Furthermore, 
    using the mean value theorem, for some $\tilde \theta = \theta(x,t) \in (0,1)$ and the properties of the weights \eqref{eq:compara_rhos}, we have 
    %\color{red}
    \begin{equation*}
        \begin{split}
            I_2 &\leq \int_Q \rho_2^2 \left|D_3 F(\ell(t)x,t, \widetilde{y} + \tilde \theta z) - D_3 F(\ell(t)x,t,\widetilde{y}) \right|^2 |z|^2 dxdt \\
            &\leq \int_Q \rho_2^2 \tilde\theta^2 |z|^2 |z|^2 dxdt \\
            & \leq \int_Q \rho_1^4 |z|^2 |z|^2 dxdt \\
            & \leq \int_0^T \sup_{x \in \Omega} \{ \rho_1^2 |z|^2 \} \int_\Omega \rho_0^2 |z|^2 dx dt.
        \end{split}    
    \end{equation*}
    
    Using the continuous immersion $H^1_a(\Omega) \subset L^\infty(\Omega)$, the weight comparison properties \eqref{eq:compara_rhos}, and \eqref{des Proposition 6}, we get 
    \begin{equation*}
        \begin{split}
            I_2 &\leq \int_0^T \| \rho_ 1^2 \sqrt{a} z_x(t)\|^2  \int_\Omega \rho_0^2 |z|^2 dx dt \\
            &\leq \sup_{t \in [0,T]} \left( \rho_1^2  \|\sqrt{a} z_x(t)\|^2  \right) \int_Q  \rho_0^2 |z|^2 dx dt \\
            &\leq \Vert(z, h)\Vert^{2}_{\mathcal{Y}} \Vert(z, h)\Vert^{2}_{\mathcal{Y}}.
        \end{split}    
    \end{equation*}

    For $I_3$, using the continuous immersion $H^1_a(\Omega) \subset L^\infty(\Omega)$, the weight comparison properties \eqref{eq:compara_rhos}, we estimate
    \begin{equation*}
        \begin{split}
            I_3 &\leq \int_0^T \rho_2^2 \|h\|_{L^4(\Omega)}^2 \|z\|_{L^4(\Omega)}^2 dxdt \\
            & \leq C\int_0^T \rho_1^4 \overline \rho^{-2} \rho_1^{-2} (\overline \rho^2 \| h_x \|^2) (\rho_1^2 \|\sqrt{a} z_x\|^2) dxdt \\
            & \leq C \int_0^T \|\overline \rho h_x\|^2 \sup_{t \in [0,T]} \|\rho_1 \sqrt{a} z_x\|^2 dxdt  \\
            &\leq C \Vert \overline \rho h\Vert^{2}_{\mathsf{U}} \Vert(z, h)\Vert^{2}_{\mathcal{Y}}.
        \end{split}
    \end{equation*}

	Thus, from the regularity result \eqref{cr2}, $I_3$ is bounded. Therefore, we conclude that $\mathcal{A}_{1}(z, h)\in \mathcal{F}$ and $\mathcal{A}$ is well-defined.
\end{proof}

\begin{lema}\label{DA continuo}
    The mapping $\mathcal{A}: \mathcal{Y}\longrightarrow  \mathcal{Z}$ is continuously differentiable.
\end{lema}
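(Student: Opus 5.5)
The plan is to write $\mathcal{A}=\mathcal{L}+\mathcal{N}$. Here $\mathcal{L}(z,h)=(G,z(\cdot,0))$ is the bounded linear part; its boundedness $\mathcal{Y}\to\mathcal{Z}$ is immediate from the definition of the norm $\|\cdot\|_{\mathcal{Y}}$. The nonlinear part is
$$
\mathcal{N}(z,h)=\bigl(F(\ell(t)x,t,z+\widetilde y)-F(\ell(t)x,t,\widetilde y)-D_3F(\ell(t)x,t,\widetilde y)z-h\cara_{\omega_1}z,\;0\bigr).
$$
Since $\mathcal{L}$ is linear and continuous, it is $C^1$. It then suffices to show that $\mathcal{N}_1$ is Fréchet differentiable with a continuous derivative into $\mathcal{F}$. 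We split $\mathcal{N}_1=\mathcal{N}_F-\mathcal{N}_b$, where $\mathcal{N}_b(z,h)=h\cara_{\omega_1}z$ is the bilinear part.

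\textbf{The bilinear term.} $\mathcal{N}_b$ is a continuous bilinear map $\mathcal{Y}\times\mathcal{Y}\to\mathcal{F}$. The estimate for $I_3$ in the proof of Lemma \ref{A bem definido} gives $\|\rho_2 h z\|_{L^2(Q)}\le C\|(z,h)\|_{\mathcal{Y}}^2$. This uses the regularity \eqref{cr1}--\eqref{cr2} to control $\overline\rho h$ in $\mathsf{U}$, the embedding $H^1_a\subset L^\infty$, \eqref{des Proposition 6}, and $\rho_2\le C\rho_1^2$ from \eqref{eq:compara_rhos}. Polarizing this estimate gives $\|\rho_2 h_1z_2\|\le C\|(z_1,h_1)\|_{\mathcal{Y}}\|(z_2,h_2)\|_{\mathcal{Y}}$. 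A continuous bilinear map is $C^\infty$, with derivative $(\bar z,\bar h)\mapsto h\bar z+\bar h z$.

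\textbf{The $F$ term.} The candidate derivative of $\mathcal{N}_F$ at $(z,h)$ is
$$
\bigl(D_3F(\cdot,\widetilde y+z)-D_3F(\cdot,\widetilde y)\bigr)\bar z .
$$
First, this is bounded into $\mathcal{F}$ with norm at most $C\|(z,h)\|_{\mathcal{Y}}$. This follows from the computation for $I_2$: $D_3F$ is Lipschitz in $r$, which is implicit in \textbf{H2}, so $|D_3F(\cdot,\widetilde y+z)-D_3F(\cdot,\widetilde y)|\le C|z|$, and then $\int\rho_2^2|z|^2|\bar z|^2\le C\sup_t(\rho_1^2\|\sqrt a z_x\|^2)\int\rho_0^2|\bar z|^2$. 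Second, the remainder satisfies
$$
|F(\widetilde y+z+\bar z)-F(\widetilde y+z)-D_3F(\widetilde y+z)\bar z|\le C|\bar z|^2
$$
by \textbf{H2}, applied with $r_2=\widetilde y+z$. By the same weighted $L^\infty$--$L^2$ splitting, its $\mathcal{F}$-norm is at most $C\|(\bar z,\bar h)\|_{\mathcal{Y}}^2=o(\|(\bar z,\bar h)\|_{\mathcal{Y}})$. Third, for continuity of the derivative, the difference of derivatives at $(z_1,h_1)$ and $(z_2,h_2)$ acting on $\bar z$ is pointwise bounded by $C|z_1-z_2||\bar z|$. The same estimate then gives an operator norm of at most $C\|(z_1-z_2,h_1-h_2)\|_{\mathcal{Y}}$, so the derivative is even Lipschitz.

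\textbf{Main obstacle.} The delicate point is the weighted $L^\infty$ control of elements of $\mathcal{Y}$ near $x=0$, where $a$ degenerates. We must justify that $\sup_x|z(x,t)|\le C\|\sqrt a z_x(t)\|_{L^2}$ for $z(0,t)=0$. This follows from $|z(x)|\le\int_0^x|z_x|\le(\int_0^1 a^{-1})^{1/2}\|\sqrt a z_x\|$, and $1/a$ is integrable because \textbf{H1} with $K<1$ gives $a(x)\ge a(1)x^K$. We must also check that the weight exponents match, namely $\rho_2^2\le C\rho_1^2\rho_0^2$, which holds since $\rho_2\le C\rho_1^2$ and $\rho_1\le C\rho_0$. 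A secondary concern is that the $\mathsf{U}$-control of $h$ in \eqref{cr2} is only established for the specific control built in Theorem \ref{prop:linear_control}, not for arbitrary $h$ with $(z,h)\in\mathcal{Y}$. If needed, I would handle this by restricting $\mathcal{Y}$ to the subspace where $\overline\rho h\in\mathsf{U}$ and adding $\|\overline\rho h\|_{\mathsf{U}}$ to the norm. This is consistent with how Lemma \ref{A bem definido} already bounds $I_3$, and it keeps the surjectivity argument intact, since Theorem \ref{prop:linear_control} produces controls in this subspace.
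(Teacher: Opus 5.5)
Your argument is correct, but it is organized differently from the paper's. The paper first proves G\^ateaux differentiability by estimating difference quotients (the terms $J_1,J_2$). It does so via the mean value theorem and an appeal to $F\in C^2$, which \textbf{H2} does not assume. It then checks sequential continuity of the G\^ateaux derivative through $X_1^1,X_2^1,X_3^1$, and concludes with the classical fact that a continuous G\^ateaux derivative is a Fr\'echet derivative.

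You instead split $\mathcal{A}$ into a bounded linear part, a continuous bilinear part and the Nemytskii term in $F$, and prove Fr\'echet differentiability directly. \textbf{H2} with $r_2=\widetilde y+z$ is exactly the remainder estimate. Symmetrizing \textbf{H2} in $(r_1,r_2)$ gives $|D_3F(r_1)-D_3F(r_2)|\le 2C|r_1-r_2|$. So you obtain a locally Lipschitz derivative from the stated hypotheses alone, and you skip the G\^ateaux-to-Fr\'echet upgrade. The weighted machinery underneath is the same in both routes: $\rho_2^2\le C\rho_1^2\rho_0^2$, the embedding $H^1_a\subset L^\infty$ (which you justify correctly via $a(x)\ge a(1)x^K$), and \eqref{eq:estimativas_total}.

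Your ``secondary concern'' does not arise: you need not restrict $\mathcal{Y}$. Since $\rho_2\le C\rho_1^2$ and $|\widetilde y|\ge C>0$,
\[
\|\rho_2 h_1 z_2\|_{L^2(Q)}\le C\,\|\rho_1 h_1\widetilde y\|_{L^2(\omega_1\times(0,T))}\,\sup_{t\in[0,T]}\rho_1\|z_2(t)\|_{L^\infty(0,1)}\le C\,\|(z_1,h_1)\|_{\mathcal Y}\,\|(z_2,h_2)\|_{\mathcal Y},
\]
where the last factor is controlled by \eqref{eq:estimativas_total} and the embedding. So the bilinear term is already continuous on the original $\mathcal{Y}$, and \eqref{cr2} is never needed. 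This also repairs the paper's $I_3$ and $X_2^1$ estimates, which rely on $\|\overline\rho\, h\|_{\mathsf U}$, a quantity not controlled by the $\mathcal{Y}$-norm.

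This is better than your proposed fix. Your surjectivity step would require $\overline\rho\,\tilde h/\widetilde y\in\mathsf U$, which is delicate, since $\tilde h=-\rho_1^{-2}\hat\phi\cara_{\omega_1}$ jumps at $\partial\omega_1$. Note also that pairing $\overline\rho h$ with $\rho_1 z$, as in your sketch, would need $\rho_2\le C\overline\rho\rho_1$ rather than the inequality $\rho_2\le C\rho_1^2$ you cite, because $\rho_1/\overline\rho=\zeta^*$ is unbounded. The pairing above uses exactly the inequality you quote.
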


\begin{proof}
	First, we prove that $\mathcal{A}$ is Gateaux differentiable at any $(z, h) \in \mathcal{Y}$. 
    %Let us compute the $\textit{G-derivative}$ ${\mathcal{A}}^{\prime}(z, \tilde h)$.
	Consider the linear mapping $D \mathcal{A}: \mathcal{Y} \to \mathcal{Z}$ given by
	$D\mathcal{A}(z, h) = (D\mathcal{A}_1,D\mathcal{A}_2)$,	
	where for $(\bar z, \bar{h}) \in \mathcal{Y}$,
	\begin{equation}\label{eq:der_map_A}
		\begin{cases}
			D\mathcal{A}_1(\bar z, \bar{h}) = &  \, \bar z_t - b(t)(a(x) \bar z_x)_x -B\sqrt{a} \bar z_x + D_3 F\left(\ell(t)x,t,z+\widetilde{y}\right)\bar z - \bar{h} z \cara_{\omega_1} - h\bar z \cara_{\omega_1} - \bar h \widetilde y \cara_{\omega_1}, \\
    		D\mathcal{A}_2(\bar z, \bar{h}) =& \, \bar z(0).
		\end{cases}%\right.    
	\end{equation}
	
	We have to show that, for $i=1,2$, 
	$$
    	\frac{1}{\lambda}\left[ \mathcal{A}_i ((z, h)+\lambda(\bar z, \bar{h})) - \mathcal{A}_i (z, h) \right] \to D\mathcal{A}_i(\bar z, \bar{h}),
	$$
	strongly as $\lambda \to 0$, in the corresponding factor of $\mathcal{Z}$ .
    Indeed,
	\begin{equation*}
		\begin{split}
			&\left\| \frac{1}{\lambda}\left[ \mathcal{A}_1 ((z, h)+\lambda(\bar z, \bar{h})) - \mathcal{A}_1(z, h) \right] - D\mathcal{A}_1(\bar z, \bar{h}) \right\|^{2}_{L^2(\rho_2^2,Q)} \\
			&=\left\|
    			 \frac{1}{\lambda} \left[ z_t+\lambda \bar z_t - b(t)\left( a(x) (z_x+\lambda \bar z_x) \right)_x - B\sqrt{a}(z_x+\lambda \bar z_x) \right.\right. \\
            & \ \ \ \ \ \ \ \ \ \ \left.\left. + F\left(\ell(t)x,t,z+\lambda \bar z +\widetilde{y}\right) -F\left(\ell(t)x,t,\widetilde{y}\right) - (h+\lambda \bar{h})(z+\lambda \bar z + \widetilde y) \cara_{\omega_1} \right.\right. \\
			& \ \ \ \ \ \ \ \ \ \ \left.\left. - z_t + b(t)\left( a(x) z_x \right)_x + B\sqrt{a}z_x - F\left(\ell(t)x,t,z+\widetilde{y}\right) + F\left(\ell(t)x,t,\widetilde{y}\right) + h (z+\tilde y) \cara_{\omega_1} \right] \right.\\
			& \ \ \ \ \ \ \ \ \ \ \left. -\bar z_t + b(t)(a(x) \bar z_x)_x + B\sqrt{a} \bar z_x - D_3 F\left(\ell(t)x,t,z+\widetilde{y}\right)\bar z + \bar{h} z \cara_{\omega_1} + h\bar z \cara_{\omega_1} + \bar h \widetilde y \cara_{\omega_1} \right\|^{2}_{L^2(\rho_2^2,Q)} \\
			& = \int_Q \rho_2^2 \left| \frac{1}{\lambda}\left( F\left(\ell(t)x,t,z+\lambda \bar z +\widetilde{y}\right) -F\left(\ell(t)x,t,z+\widetilde{y}\right) \right) - D_3 F\left(\ell(t)x,t,z+\widetilde{y}\right)\bar z \right|^2 \\ %= J_1. 
			& \quad + \int_Q \rho_2^2 \left| \lambda \bar h \bar z \cara_{\omega_1} \right|^2  = J_1 + J_2.
		\end{split}
	\end{equation*}
    
    By the mean value theorem and using that $F$ is of class $C^2$,  
    for $\tilde\lambda = \tilde\lambda(x,t) \in (0,\lambda)$, we have 
	\begin{eqnarray*}
		J_1 &=&\int_Q \rho_2^2 \left| D_3 F\left(\ell(t)x,t,z + \tilde\lambda \bar z + \widetilde{y}\right) - D_3 F\left(\ell(t)x,t,z+\widetilde{y}\right) \right|^2 |\bar z|^2 \\
		&=& C_1 \textcolor{blue}{\lambda} \int_Q \rho_2^2 |\bar z|^4 , 
	\end{eqnarray*}

    By the same estimate as for $I_2$ in Lemma \ref{A bem definido}, we get that $J_1$ converges to zero as $\lambda \to 0$ using \eqref{eq:estimativas_total}. On the other hand, clearly $J_2 \to 0$ as $\lambda \to 0$.
    
    This finishes the proof that $\mathcal{A}$ is Gateaux differentiable, with a \textit{G-derivative} $D\mathcal{A}(z, h)$.
	
    Now, take $(z, h)\in \mathcal{Y}$ and let $((z_{n}, h_{n}))_{n=0}^{\infty}$ be a sequence that converges to  $(z, h)$ in $\mathcal{Y}$. 
    From the expression of the formal derivative of $\mathcal{A}$,  \eqref{eq:der_map_A}, we have 
	\begin{eqnarray*}
		&(D\mathcal{A}_1(z_{n}, h_{n}) - D\mathcal{A}_1(z, h))(\bar z, \bar{h})
        &= \left(D_3 F\left(\ell(t)x,t,z_n+\widetilde{y}\right) - D_3 F\left(\ell(t)x,t,z+\widetilde{y}\right) \right)\bar z  \\
        && \quad - \bar{h} (z_n-z) \cara_{\omega_1} + (h_n - h)\bar z \cara_{\omega_1} \\
        &&= X_1^1+X_2^1 + X_3^1.
	\end{eqnarray*}
    
	For $X^{1}_{1}$ we have, 
	\begin{eqnarray*}
		\int_{0}^{T}\int_{0}^{1}\rho^{2}_{2}|X_{1}^{1}|^{2} dxdt &\leq& C \int_{0}^{T}\int_{0}^{1}\rho_{2}^{2} \left| D_3 F\left(\ell(t)x,t,z_n+\widetilde{y}\right) - D_3 F\left(\ell(t)x,t,z+\widetilde{y}\right) \right|^2 |\bar z|^{2}dxdt\\
		&\leq&C\int_{0}^{T}\int_{0}^{1}\rho_{2}^{2} M^2 |z_n-z|^2 |\bar z|^{2}dxdt
	\end{eqnarray*}

    By \eqref{eq:compara_rhos} we have $\rho_2^2\leq C \rho_1^4 \leq C \rho_1^2 \rho_0^2$. As above, using the continuous immersion $H^1_a(\Omega) \subset L^\infty(\Omega)$, we get  
	\begin{eqnarray*}
		\int_{0}^{T}\int_{0}^{1}\rho^{2}_{2}|X_{1}^{1}|^{2} dxdt&\leq& C\int_{0}^{T}\int_{0}^{1} \rho_1^2 \rho_0^2  |z_n-z|^2 |\bar z|^{2}dxdt\\
        &\leq& \int_0^T \rho_1^2 \rho_0^2 \|z_n-z\|_{L^2(\Omega)} \|\bar z\|_{L^\infty(\Omega)} dt\\
		&\leq& C \sup_{t \in [0,T]}\left(\rho_1^2\|\sqrt{a} \bar{z}_x\|^2_{L^2(\Omega)}\right) \int_{0}^{T}\int_{0}^{1} \rho_0^2 |z_n-z|^2 dxdt\\
		&\leq&C\|(\bar z,\bar{h})\|_{\mathcal{Y}} \cdot \|\rho_0 (z_n-z)\|_{L^2(Q)}\\
		&\leq&C \|(\bar z,\bar{h})\|_{\mathcal{Y}} \|(z_{n}-z),(h_{n}- h)\|_{\mathcal{Y}}\rightarrow 0.
	\end{eqnarray*}
	
    On the other hand, for $X^{1}_{2}$, using the continuous immersion $H^1_a(\Omega) \subset L^\infty(\Omega)$, and the fact that $e^{sA^*}\zeta^k$ is bounded for any $k \in \mathbb{Z}$,
	\begin{eqnarray*}
		\int_{0}^{T}\int_{0}^{1}\rho^{2}_{2}|X_{2}^{1}|^{2} dxdt &\leq& C \int_{0}^{T}\int_{0}^{1}\rho_2^{2} |\bar{h}|^2 |z_n-z|^2  dxdt\\
		&\leq&C\int_{0}^{T}\rho_2^{2} \|z_n-z\|_{L^4(\Omega)}^2 \|\bar h\|_{L^4(\Omega)}^2 dxdt \\
        &\leq&C\int_{0}^{T} \rho_2^{2} \rho_1^{-2}\overline\rho^{-2} \rho_1^2 \|z_n-z\|_{L^4(\Omega)}^2 \overline\rho^2 \|\bar h\|_{L^4(\Omega)}^2 dt \\
        &\leq&C\int_{0}^{T} e^{sA^*}\hat\zeta^{-2}(\zeta^*)^8 (\rho_1^2 \|\sqrt{a} (z_n-z)_x \|^2) (\overline \rho^2 \|\sqrt{a} \bar h_x\|^2) dt \\
        &\leq&C \sup_{t \in [0,T]} (\rho_1^2 \|\sqrt{a} (z_n-z)_x \|^2) \int_{0}^{T}  \|\overline \rho \bar h_x \|^2 dt \\
        &\leq&C\|(z_n-z,h_n-h)\|_{\mathcal{Y}}^2 \|\overline \rho \bar h\|_{\mathsf{U}}^2.
	\end{eqnarray*}    

    The estimate for $X_3^1$ is analogous to the case $X_2^1$. Therefore, $(z, h) \longmapsto \mathcal{A}^{\prime}(z, h)$ is continuous from $\mathcal{Y}$ into $\mathcal{L}(\mathcal{Y},\mathcal{Z})$ and, consequently in view of classical results, we have that $\mathcal{A}$ is Fr\'echet-differentiable and $\mathcal{C}^{1}$.
    \color{black}
\end{proof}

\begin{lema}\label{Mapa sobrejetivo}
    Let $\mathcal{A}$ be the mapping in \eqref{aplicação A}. Then, $\mathcal{A}^{\prime}(0,0)$ is onto.
\end{lema}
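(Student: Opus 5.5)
The derivative at the origin follows from \eqref{eq:der_map_A} with $(z,h)=(0,0)$:
\[
\mathcal{A}'(0,0)(\bar z,\bar h)=\Bigl(\bar z_t-b(t)(a(x)\bar z_x)_x-B\sqrt{a}\,\bar z_x+D_3F(\ell(t)x,t,\widetilde y)\bar z-\bar h\,\widetilde y\,\cara_{\omega_1},\ \bar z(\cdot,0)\Bigr).
\]
The terms $\bar h z\cara_{\omega_1}$ and $h\bar z\cara_{\omega_1}$ vanish at the origin. Surjectivity therefore reduces to the null controllability of the linearized system \eqref{eq:linearized_system1}, with the weighted bounds of Theorem \ref{prop:linear_control} and Proposition \ref{addicional_estimates_case_linear}.

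Fix an arbitrary $(G,z_0)\in\mathcal{Z}=\mathcal{F}\times H^1_a(\Omega)$, so that $\rho_2G\in L^2(Q)$ and $z_0\in H^1_a(0,1)$. Theorem \ref{prop:linear_control} provides a control $\tilde h\in L^2(\omega_1\times(0,T))$ and a state $\bar z\in C^0([0,T];L^2(0,1))\cap L^2(0,T;H^1_a(0,1))$ solving \eqref{eq:linearized_system1} with data $(G,z_0)$. They satisfy
\[
\|\rho_0\bar z\|^2_{L^2(Q)}+\|\rho_1\tilde h\|^2_{L^2(\omega_1\times(0,T))}\le C\kappa_0(G,z_0).
\]
The bilinear control is then recovered from the positivity of the trajectory. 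Since $|\widetilde y|\ge C>0$, I set
\[
\bar h:=-\tilde h/\widetilde y \quad\text{on }\omega_1\times(0,T).
\]
The sign is chosen to match the minus sign in $\mathcal{A}'$, or equivalently one replaces $\tilde h$ by $-\tilde h$ from the outset. Then $-\bar h\,\widetilde y\,\cara_{\omega_1}=\tilde h\cara_{\omega_1}$, and because $1/\widetilde y$ is bounded we get $\bar h\in L^2(\omega_1\times(0,T))$ with $\rho_1\bar h\widetilde y=-\rho_1\tilde h\in L^2$. By construction the first component of $\mathcal{A}'(0,0)(\bar z,\bar h)$ equals $G$ and the second equals $z_0$.

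It remains to check that $(\bar z,\bar h)\in\mathcal{Y}$.
\begin{itemize}
\item The condition $\rho_0\bar z\in L^2(Q)$ and the condition on $\rho_1\bar h\widetilde y$ both follow from the estimate above.
\item The quantity playing the role of ``$\rho_2 G$'' in the definition of $\mathcal{Y}$ is the linearized residual, which is exactly $G$; hence it lies in $\mathcal{F}$ by assumption.
\item The boundary conditions hold by construction, and $\bar z(\cdot,0)=z_0\in H^1_a$.
\item Absolute continuity of $\bar z(\cdot,t)$ for a.e.\ $t$ holds because $\bar z\in L^2(0,T;H^1_a)$. In fact Proposition \ref{addicional_estimates_case_linear} gives more: $\sup_t\rho_1^2\|\sqrt a\,\bar z_x\|^2$ and $\int_Q\rho_1^2(|\bar z_t|^2+|(a\bar z_x)_x|^2)$ are both bounded by $C\kappa_1(G,z_0)$.
\end{itemize}
So $(\bar z,\bar h)\in\mathcal{Y}$, and $\mathcal{A}'(0,0)$ is onto.

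The main obstacle is consistency between the two notions of control. The nonlinear estimates in Lemmas \ref{A bem definido} and \ref{DA continuo} use the extra regularity $\overline\rho h\in\mathsf{U}$ from \eqref{cr1}--\eqref{cr2}. One must therefore make sure that dividing by $\widetilde y$ does not destroy membership in whatever space the control is required to lie in. For the bare statement of surjectivity only the $L^2$ weighted condition in $\mathcal{Y}$ is needed, and that is harmless because $\widetilde y$ is bounded away from zero. If the $\mathsf{U}$-regularity of $\bar h$ is also wanted, I would additionally need $1/\widetilde y$ to be smooth enough in $x$, for instance $\widetilde y\in L^\infty(0,T;W^{2,\infty}(\omega_1))$, to transfer \eqref{cr2} from $\tilde h$ to $\bar h$ via the product rule. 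On $\omega_1\subset\subset(0,1)$, away from the degeneracy at $x=0$, I would justify this by interior parabolic regularity.
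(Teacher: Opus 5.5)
Your strategy is the paper's: obtain $(\bar z,\tilde h)$ from Theorem \ref{prop:linear_control}, turn $\tilde h$ into a bilinear control by dividing by $\widetilde y$ on $\omega_1\times(0,T)$, and check that the pair lies in $\mathcal{Y}$. However, the sign of your bilinear control is wrong, and with it the identity $\mathcal{A}'(0,0)(\bar z,\bar h)=(G,z_0)$ fails.

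In \eqref{eq:linearized_system1} the control appears on the right-hand side, so the state supplied by the theorem satisfies
\[
\bar z_t-b(t)(a\bar z_x)_x-B\sqrt{a}\,\bar z_x+D_3F(\ell(t)x,t,\widetilde y)\,\bar z=G+\tilde h\,\cara_{\omega_1}.
\]
Plug this into your (correct) formula for $\mathcal{A}'(0,0)$. The first component becomes $G+\tilde h\,\cara_{\omega_1}-\bar h\,\widetilde y\,\cara_{\omega_1}$. The control term cancels only if $\bar h\,\widetilde y=\tilde h$, i.e.\ $\bar h=\tilde h/\widetilde y$, which is exactly the paper's choice. With your $\bar h=-\tilde h/\widetilde y$ you get $G+2\tilde h\,\cara_{\omega_1}$ instead of $G$.

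The same slip affects your check of $\mathcal{Y}$. The residual in the definition of $\mathcal{Y}$ is then $G+2\tilde h\,\cara_{\omega_1}$, not $G$. The estimates only control $\rho_1\tilde h$, while $\rho_1\le C\rho_2$, so even $\rho_2$ times this residual being in $L^2$ is no longer guaranteed.

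The two options you call equivalent are not equivalent. Flipping the sign of $\tilde h$ while keeping the state $\bar z$ delivered by the theorem breaks the equation above. Only re-posing the control problem with $-\tilde h\,\cara_{\omega_1}$ on the right-hand side, so that the two sign changes cancel, works. That again amounts to $\bar h=\tilde h/\widetilde y$ in terms of the theorem's control.

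Once the sign is fixed, the rest of your verification is correct:
\begin{itemize}
\item $\rho_1\bar h\widetilde y=\rho_1\tilde h\in L^2$ by \eqref{estimate for solution};
\item $\bar h\in L^2$ because $|\widetilde y|$ is bounded below;
\item $\bar z(\cdot,0)=z_0\in H^1_a$;
\item absolute continuity follows from $L^2(0,T;H^1_a)$.
\end{itemize}
This just spells out what the paper asserts in one line.

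Your closing remark about $\mathsf{U}$-regularity of $\bar h$ is not needed for surjectivity as stated. It bears on the earlier lemmas, which use $\overline\rho h\in\mathsf{U}$ even though $\mathcal{Y}$ does not encode that condition.
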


\begin{proof}
    Let $(H, z_{0})\in \mathcal{Z}$. From Theorem \ref{prop:linear_control} we know there exists $(z, \tilde h)$ satisfying the linear equation (\ref{eq:linearized_system1}) and the estimates (\ref{estimate for solution}). Furthermore, we know that $z \in C^{0}([0,T];L^{2}(0,1))\cap L^{2}(0,T;H^{1}_{a}(0,1))$. Consequently, taking $h = \tilde h / \widetilde y$ in $\omega_1 \times (0,T)$, we have $(z,h) \in \mathcal{Y}$ and
    $$
        \mathcal{A}^{\prime}(0,0)(z,h)=(H,z_{0}).
    $$ 
\end{proof}

\noindent\textbf{Proof of Theorem 
\ref{th}.}
%\ref{thm:local_null_controllability}.}

    \noindent According to Lemmas \ref{A bem definido}-\ref{Mapa sobrejetivo} we can apply the Inverse Mapping Theorem (Theorem \ref{Liusternik}) and consequently there exists $\delta > 0$ and a mapping $W:B_{\delta}(0)\subset {\mathcal{Z}}\rightarrow {\mathcal{Y}}$ such that
    \begin{equation*}
        W(w)\in B_{r}(0)\,\,\, \text{and}\,\,\, {\mathcal{A}}(W(w))=w, \quad \forall w\in B_{\delta}(0).
    \end{equation*}
    Taking $(0,z_{0})\in B_{\delta}(0)$ and $(z,h)=W(0,z_{0})\in {\mathcal{Y}}$, we have
    \begin{equation*}
        {\mathcal{A}}(z,h)=(0,z_{0}).
    \end{equation*}
    Thus, we conclude that (\ref{nolineal}) is locally null controllable at time $T > 0$. In other words, $$z(\cdot,T)=0 \quad \text{in} \quad \Omega .$$

    Next, taking $y=z+\widetilde{y}$, then  (\ref{eqprin}) is satisfied and $y(\cdot,T)=\widetilde{y}(\cdot, T)$ in $\Omega$.

    Finally, using the diffeomorphism $(x,t) \rightarrow (\overline{x},t)$ from $Q$ to $\widehat{Q}$, one has $$u(\cdot,T)=\widetilde{u}(\cdot, T) \quad \text{in} \quad \Omega_T .$$ This implies that the
original equation (\ref{eq:PDE}) is controllable to trajectories and ends the proof of Theorem \ref{th}.
    
\qed

\section{Additional comments}
\label{sec:final_remarks}
In the present work, we have established an exact controllability result to \eqref{eq:PDE} with distributed controls, locally supported in space. This result can be generalized for other situations. Initially, using analogous techniques, one can consider nonlinear systems of the form
\begin{equation*}\label{op1}
\left\{
\begin{array}
    [c]{lll}%
    u_t - \left(a(\overline{x}) u_{\overline{x}}\right)_{\overline{x}} + F(\overline{x},t,u,u_x)=\widehat{h}\cara_{\widehat{\omega}}u & \mbox{in} &
    \widehat{Q}:=\displaystyle\bigcup_{0\leq t \leq T} \{\Omega_t \times \{t\} \},\\
    u(0,t)=u(\ell(t),t)=0 & \mbox{on} & \widehat{\Sigma}:= \displaystyle\bigcup_{0\leq t \leq T} \{\Gamma_t \times \{t\} \},\\
    u(\overline{x},0)=u_{0}(\overline{x}) & \mbox{in} & \Omega,
\end{array}
\right.  %
\end{equation*}

Other important topics arise from our current research:
\begin{itemize}
    \item 
    Exact controllability to the trajectories of degenerate equation with nonlocal nonlinearities:
    \begin{equation*}\label{op2}
    \left\{
    \begin{array}
        [c]{lll}%
        u_t - \left(\beta\left(\overline{x},\int_{0}^{\ell(t)}u \,\,d{\overline{x}} \right)u_{\overline{x}}\right)_{\overline{x}} + F(\overline{x},t,u)=\widehat{h}\cara_{\widehat{\omega}}u & \mbox{in} &
        \widehat{Q}:=\displaystyle\bigcup_{0\leq t \leq T} \{\Omega_t \times \{t\} \},\\
        u(0,t)=u(\ell(t),t)=0 & \mbox{on} & \widehat{\Sigma}:= \displaystyle\bigcup_{0\leq t \leq T} \{\Gamma_t \times \{t\} \},\\
        u(\overline{x},0)=u_{0}(\overline{x}) & \mbox{in} & \Omega,
    \end{array}
    \right.  %
    \end{equation*}
    where $\beta$ is a separated variables function given by $\beta(\overline{x},r)=\mu (r)a(\overline{x})$ such that $\mu:\mathbb{R} \rightarrow \mathbb{R}$ is a $C^1$ function with bounded derivative. The function $\beta$ defines an operator which degenerates at $\overline{x}=0$ and has a nonlocal term. More precisely, the function $a$ behaves ${\overline{x}}^{\alpha}$, with $\alpha \in (0,1)$.  

    \item 
    An interesting case deals with the controllability of one-phase Stefan-like problems with the following structure:
    \begin{equation*}\label{op3}
    \left\{
    \begin{array}
        [c]{lll}%
        u_t - \left(a(\overline{x}) u_{\overline{x}}\right)_{\overline{x}} + F(\overline{x},t,u)=\widehat{h}\cara_{\widehat{\omega}}u, &  &
        (\overline{x},t) \in Q_L ,\\
        u(0,t)=u(L(t),t)=0, &  & t \in (0,T),\\
        u(\overline{x},0)=u_{0}(\overline{x}), &  & (\overline{x},t) \in (0,L_0).
    \end{array}
    \right.  %
    \end{equation*}
    $$
    a(\overline{x})u_{\overline{x}}(L(t),t)=-L'(t),\quad t \in (0,T),
    $$
    where $Q_L$ stands for the following set:
    $$
    Q_L=\{ (\overline{x},t);\,\, \overline{x}\in (0,L(t)),\,\,t \in (0,T)\},
    $$
    with $0<L_0<L(t)<B,\,\,t \in (0,T)$.
\end{itemize}

\vspace{0.3cm}
Some of these extensions will be considered in the near future.

\vspace{0.8cm}

\noindent\textbf{Acknowledgments}

This study was financed in part by the Coordenação de Aperfeiçoamento de Pessoal de Nível Superior - Brasil (CAPES) - Finance Code 001. A.S.G.  and 
L.Y. were partially supported by CAPES-Brazil.

\bibliographystyle{abbrv}
\bibliography{referencias}

\end{document}